\newtheorem{thm}{Theorem}[section]
\newtheorem{cor}[thm]{Corollary}
\newtheorem{prop}[thm]{Proposition}
\theoremstyle{definition}
\newtheorem{rem}[thm]{Remark}
\numberwithin{equation}{section}
\begin{document}
\title[Positive solutions of $(p,q)$-Laplacian systems]{A topological approach to the existence and multiplicity of positive solutions of 
$(p,q)$-Laplacian systems}
\author[G. Infante]{Gennaro Infante}
\address{Gennaro Infante, Dipartimento di Matematica ed Informatica,
Universit\`{a} della Calabria, 87036 Arcavacata di Rende, Cosenza, Italy}
\email{gennaro.infante@unical.it}
\author[M. Maciejewski]{Mateusz Maciejewski}
\address{Mateusz Maciejewski, Nicolaus Copernicus University, Faculty of
Mathematics and Computer Science, ul. Chopina 12/18, 87-100 Toru\'n, Poland}
\email{Mateusz.Maciejewski@mat.umk.pl}
\author[R. Precup]{Radu Precup}
\address{Radu Precup, Departamentul de Matematic\u{a}, Universitatea Babe%
\c{s}-Bolyai, Cluj 400084, Romania}
\email{r.precup@math.ubbcluj.ro}
\subjclass[2010]{Primary 35J57, secondary 35B09, 35B45, 35D30, 35J92, 47H10}
\keywords{Weak Harnack inequality, fixed point index, $p$-Laplace operator,
quasilinear elliptic system, positive weak solution, cone, multiplicity,
nonexistence.}

\begin{abstract}
In this paper we develop a new theory for the existence, localization and
multiplicity of positive solutions for a class of non-variational,
quasilinear, elliptic systems. In order to do this, we provide a fairly
general abstract framework for the existence of fixed points of nonlinear
operators acting on cones that satisfy an inequality of Harnack type. Our
methodology relies on fixed point index theory. We also provide a
non-existence result and an example to illustrate the theory.
\end{abstract}

\maketitle

\section{Introduction}

In this paper we develop a new theory for the existence, localization and
multiplicity of nonnegative weak solutions to the following Dirichlet
problem for $(p,q)$-Laplacian systems%
\begin{equation}
\left\{
\begin{array}{ll}
-\Delta _{p}u=f(x,u,v) & \text{in }\Omega , \\
-\Delta _{q}v=g(x,u,v) & \text{in }\Omega , \\
u,v=0 & \text{on }\partial \Omega ,%
\end{array}%
\right.  \label{eq:diff}
\end{equation}
where $\Omega \subset \mathbb{R}^{n}$ is a bounded domain of class $%
C^{1,\gamma }$ for some $\gamma \in (0,1),$ $f,g:\overline{\Omega }\times
\mathbb{R}_{+}^{2}\rightarrow \mathbb{R}_{+}$ are continuous functions, and $%
p,q>2n/\left( n+1\right) .$

The existence of positive solutions of these types of problems has been
investigated by means of different methodologies. The most common approach
is topological, for example Schauder fixed point theorem was used in~\cite%
{MR2302915}, Schaefer fixed point theorem was employed in~\cite{MR2598811},
Leray-Shauder degree theory was exploited in~\cite{MR1781264, MR2233375,
MR3041022}, fixed point index on cones was applied in ~\cite{MR2927480,
MR2648206} and continuation methods were used in~\cite{MR1929884}.
Variational methods, making use of the Nehari manifold, have been used in
\cite{MR2946850,MR2746785}, super and subsolution methods were applied in ~%
\cite{MR2945973, MR2038734} and monotone techniques in~\cite{MR2807411}.

The multiplicity of solutions was studied in \cite{MR2946850,MR2746785,
MR3041022}, nonexistence was investigated in \cite{MR2233375, MR2648206,
MR3041022}, a priori estimates were given in \cite{MR1929884, MR3041022},
regularity results were obtained in \cite{MR2945973} and qualitative
properties of the solutions have been studied in \cite{MR2683690}.

Localization results have been given in \cite{MR2927480}, where the authors
proved the existence of one positive solution in the case $p=q,$ and in \cite%
{MR1781264}, where the authors dealt with the existence of radially
symmetric solutions in a ball.

Here we develop a new method that deals with the existence, localization and
multiplicity of solutions in cones, for systems of two (or more) abstract
equations. This method can be applied, \emph{as a special case}, to deal
with the existence of positive solutions of the system~\eqref{eq:diff}. Our
approach is purely topological and is based on an abstract Harnack-type
inequality and on the fixed point index theory. In order to do this, we
fully exploit the recent theory developed by Precup in~\cite{b:RP_TMNA, b:RP
JFPTAA} for the case of one equation, where the author has obtained some
Krasnosel'skii-type results. We remark that our results are not a trivial
extension of the ones in~\cite{b:RP_TMNA, b:RP JFPTAA} to the case of
systems. In fact, we fully benefit of the richer structure of the system and
we improve the theory, even in the case of one equation, by allowing better
constants and a more precise localization of the solutions.

In the case of the system~\eqref{eq:diff} we obtain existence, localization,
multiplicity and non-existence of positive weak solutions. We also provide
an example to illustrate the theoretical results.

Our results are new and improve and complement earlier ones in literature.

\section{Operator equations on Cartesian products}

\label{sect:abstract}

Let $X_{i}\ (i=1,2)$ be Banach spaces with norms $|\cdot|_{i}$ ordered by cones $%
K_{i}^{0}$ and let $\Vert \cdot \Vert _{i}$ be seminorms on $X_{i}.$ Denote by $%
\leq _{i}$ the partial order relation associated with $K_{i}^{0}.$ Assume
that both norms and seminorms are monotone, i.e.%
\begin{equation*}
0\leq _{i}u\leq _{i}v\ \ \text{implies\ \ }|u|_{i}\leq |v|_{i}\ \ \text{and\
\ }\Vert u\Vert _{i}\leq \Vert v\Vert _{i}
\end{equation*}%
for $u,v\in X_{i}.$ In what follows, for simplicity, we shall use the same
symbols $\left\vert \cdot\right\vert ,\ \left\Vert \cdot \right\Vert   ,\ \leq $ $\ $to
denote $\left\vert \cdot\right\vert _{i},\ \left\Vert \cdot \right\Vert   _{i},$ $\leq
_{i}$ for both $i=1,2.$

Let $\chi _{i}\in K^0_i$ be fixed such that $\Vert \chi _{i}\Vert >0.$
Define the cones $K_{i}\subset K^0_i$ by the formula
\begin{equation*}
K_{i}:=\left\{ u\in K^0_i:u\geq \left\Vert u\right\Vert \chi _{i}\right\}
\end{equation*}%
and assume that there exist points inside them with positive seminorms,
which is equivalent to the assumption $\Vert \chi _{i}\Vert \leq 1.$ Hence,
we can choose
\begin{equation}  \label{eq:phi_i}
\phi _{i}\in K_{i},\ \ |\phi _{i}|=1,\ \ \Vert \phi _{i}\Vert >0.
\end{equation}
In particular we may take%
\begin{equation*}
\phi _{i}=\frac{\chi _{i}}{\left\vert \chi _{i}\right\vert }.
\end{equation*}%
Let us observe that the seminorm $\|\cdot\|_i $ is continuous in $K_i$ with
respect to the topology induced by the norm $|\cdot|_i$, since one has:
\begin{equation}  \label{eq:seminorm-cont}
u\geq \|u\| \chi_i,\ u\in K_i \implies \|u\|\leq \frac 1{|\chi_i|}|u|,\ u\in
K_i.
\end{equation}

In what follows by \emph{compactness} of a continuous operator we mean the
relative compactness of its range. By \emph{complete continuity} of a
continuous operator we mean the relative compactness of the image of every
bounded set of the domain.

We set
\begin{equation*}
X:=X_{1}\times X_{2},\quad K:=K_{1}\times K_{2},
\end{equation*}%
and we seek the fixed points of a completely continuous operator
\begin{equation*}
N=(N_{1},N_{2})\colon K\rightarrow K,
\end{equation*}%
that is $\left( u,v\right) \in K$ with $N\left( u,v\right) =\left(
u,v\right) .$ Note that the cone-invariance of $N$ is equivalent to the fact
that the operators $N_{i}$ satisfy an \emph{abstract weak Harnack inequality}
of the type
\begin{equation*}
N_{i}(u,v)\geq \Vert N_{i}(u,v)\Vert \chi _{i},\ \ \ \left( i=1,2\right) .
\end{equation*}

We shall discuss not only existence, but also localization and multiplicity
of solutions of the nonlinear equation $N\left( u,v\right) =\left(
u,v\right).$ In order to do this, we utilize the Granas fixed point index, $%
\mathrm{ind}_C(f,U)$ (for more information on the index and its applications
we refer the reader to \cite{d, gd}).

The next Proposition describes some of the useful properties of the index,
for details see Theorem 6.2, Chapter 12 of \cite{gd}.

\begin{prop}
\label{fpi-pro} Let $C$ be a closed convex subset of a Banach space, $%
U\subset C$ be open in $C$ and $f\colon \overline{U}\rightarrow C$ be a
compact map with no fixed points on the boundary $\partial U$ of $U.$ Then
the fixed point index has the following properties:

\emph{(i)} (Existence) If $\mathrm{ind}_{C}(f,U)\neq 0$ then $\mathrm{fix}%
(f)\neq \emptyset $, where $\mathrm{fix}f=%
\mbox{$\left\{x\in\overline
U:\;f(x)=x\right\}$}$.

\emph{(ii)} (Additivity) If $\mathrm{fix}f\subset U_{1}\cup U_{2}\subset U$
with $U_{1},U_{2}$ open in $C$ and disjoint, then
\begin{equation*}
\mathrm{ind}_{C}(f,U)=\mathrm{ind}_{C}(f,U_{1})+\mathrm{ind}_{C}(f,U_{2}).
\end{equation*}

\emph{(iii)} (Homotopy invariance) If $h\colon \overline{U}\times \lbrack
0,1]\rightarrow C$ is a compact homotopy such that $h(u,t)\neq u$ for $u\in
\partial U$ and $t\in \lbrack 0,1]$ then
\begin{equation*}
\mathrm{ind}_{C}(h(.,0),U)=\mathrm{ind}_{C}(h(.,1),U).
\end{equation*}

\emph{(iv)} (Normalization) If $f$ is a constant map, with $f(u)=u_{0}$ for
every $u\in \overline{U},$ then
\begin{equation*}
\mathrm{ind}_{C}(f,U)=%
\begin{cases}
1,\quad \text{if}\ u_{0}\in U \\
0,\quad \text{if}\ u_{0}\notin \overline{U}.%
\end{cases}%
\end{equation*}

In particular, $\mathrm{ind}_{C}(f,C)=1$ for every compact function $%
f:C\rightarrow C,$ since $f$ is homotopic to any $u_{0}\in C,\ $by the
convexity of $C$ (take $h\left( u,t\right) =tf\left( u\right) +\left(
1-t\right) u_{0}$).
\end{prop}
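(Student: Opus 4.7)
The plan is to derive the four properties from the standard construction of the Granas fixed point index via finite-dimensional approximation and the Brouwer degree, since closed convex subsets of Banach spaces are absolute retracts (by Dugundji's extension theorem) and in particular ANRs.

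The construction proceeds as follows. Given a compact $f:\overline{U}\to C$ with no fixed points on $\partial U$, one produces Schauder approximations $f_\varepsilon$ of $f$, each taking values in a finite-dimensional convex subset $C_\varepsilon\subset C$ obtained as the convex hull of a finite $\varepsilon$-net of $f(\overline{U})$. For all sufficiently small $\varepsilon>0$ one then sets
\[
\mathrm{ind}_C(f,U) \;:=\; \deg_B\!\bigl(I-f_\varepsilon,\;U\cap C_\varepsilon,\;0\bigr).
\]
The main obstacle is to prove this definition is independent of $\varepsilon$ and of the finite-dimensional slice $C_\varepsilon$. This is settled by the excision and homotopy properties of the Brouwer degree, applied to the straight-line homotopy $t f_{\varepsilon_1}+(1-t)f_{\varepsilon_2}$, which remains in $C$ by convexity and stays fixed-point-free near $\partial U$ for small $\varepsilon_1,\varepsilon_2$ thanks to uniform approximation together with the hypothesis $f(u)\neq u$ on $\partial U$.

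Once well-definedness is established, the four properties transfer directly from the corresponding properties of the Brouwer degree. Existence (i) follows because a nonzero Brouwer degree yields a fixed point of each $f_\varepsilon$ in $U\cap C_\varepsilon$, and a diagonal extraction using compactness of $f$ produces a fixed point of $f$ itself. Additivity (ii) and Homotopy invariance (iii) are inherited from their finite-dimensional counterparts, the latter requiring a uniform-in-$t$ Schauder approximation of the homotopy. Normalization (iv) is immediate: for a constant map $f\equiv u_0$ one may take $f_\varepsilon=f$ and read off the value $1$ or $0$ from the corresponding computation of the Brouwer degree.

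The closing remark $\mathrm{ind}_C(f,C)=1$ then follows by setting $U=C$, observing that $\partial U=\emptyset$, and applying the straight-line homotopy $h(u,t)=tf(u)+(1-t)u_0$ to any fixed $u_0\in C$. Convexity of $C$ keeps $h(\cdot,t)$ inside $C$ for every $t\in[0,1]$, and compactness of $f$ makes $h$ compact, so Homotopy invariance combined with Normalization yields the claim.
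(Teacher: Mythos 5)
The paper does not prove this proposition at all: it is quoted as a known result, with an explicit pointer to Theorem 6.2, Chapter 12 of Granas--Dugundji, so there is no in-paper argument to compare against. Your proposal is, in outline, exactly the standard construction that the cited source carries out (Schauder approximation into finite-dimensional convex slices, Brouwer degree, transfer of the degree axioms), and the closing computation of $\mathrm{ind}_C(f,C)=1$ via the straight-line homotopy to a constant is precisely the argument the authors themselves sketch in the last sentence of the proposition. Two technical points in your sketch deserve more care if this were to be written out in full. First, $U\cap C_\varepsilon$ is open only \emph{relative to} $C_\varepsilon$, which is a compact convex subset of a finite-dimensional affine subspace and generally has empty interior there; the Brouwer degree $\deg_B(I-f_\varepsilon, U\cap C_\varepsilon,0)$ is therefore not literally defined, and one must either first develop the finite-dimensional fixed point index for convex sets (e.g.\ by composing with a retraction of an open neighbourhood onto $C_\varepsilon$) or work in the relative interior. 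Second, to show independence of $\varepsilon$ you compare two approximations living in \emph{different} slices $C_{\varepsilon_1},C_{\varepsilon_2}$; the straight-line homotopy lives in $\mathrm{conv}(C_{\varepsilon_1}\cup C_{\varepsilon_2})$, so besides homotopy invariance you need the reduction (contraction) property of the Brouwer degree to identify degrees computed in slices of different dimension. Neither issue is a flaw in the strategy --- both are handled in the standard references --- but as stated your argument has these gaps.
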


\subsection{Solutions with at least one nonzero component}

We begin with four theorems on the existence and localization of one
solution of the operator equation $N(u,v)=(u,v)$. The first two ones assume
that the operator $N$ leaves invariant the set
\begin{equation*}
C=\left\{ \left( u,v\right) \in K:\left\vert u\right\vert \leq R_{1},\
\left\vert v\right\vert \leq R_{2}\right\} ,
\end{equation*}%
for some fixed numbers $R_{1}, R_{2}$.

\begin{thm}
\label{thm:abstr1} Assume that there exist numbers $r_{i}> 0$ and $R_{i}>0$
with
\begin{equation}
r_{i}<\Vert \phi _{i}\Vert \Vert \chi _{i}\Vert R_{i}\ \ \ (i=1,2),
\label{5}
\end{equation}%
such that
\begin{equation}
\inf\limits_{\substack{ (u,v)\in C  \\ \Vert u\Vert =r_{1},\Vert v\Vert \leq
r_{2}}}\Vert N_{1}(u,v)\Vert \geq \frac{r_{1}}{\Vert \chi _{1}\Vert },\ \
\inf\limits_{\substack{ (u,v)\in C  \\ \Vert u\Vert \leq r_{1},\Vert v\Vert
=r_{2}}}\Vert N_{2}(u,v)\Vert \geq \frac{r_{2}}{\Vert \chi _{2}\Vert },
\label{6}
\end{equation}%
and%
\begin{equation}
\sup\limits_{(u,v)\in C}|N_{i}(u,v)|\leq R_{i}\ \ \ \ (i=1,2).  \label{4}
\end{equation}%
Then $N$ has at least one fixed point $(u,v)\in K$ such that $|u|\leq R_{1},$
$|v|\leq R_{2}$ and either $\Vert u\Vert \geq r_{1}$ or $\Vert v\Vert \geq
r_{2}.$
\end{thm}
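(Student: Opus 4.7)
The plan is to apply the fixed point index on the closed convex set $C$ and combine the normalization, additivity, and homotopy properties of Proposition~\ref{fpi-pro}. Observe first that $C$ is convex (the intersection of the convex cone $K$ with a product of closed norm-balls); assumption~(\ref{4}) gives $N(C)\subset C$, so by the last remark of Proposition~\ref{fpi-pro} one has $\mathrm{ind}_{C}(N,C)=1$. Next I would introduce the open (in $C$) subset
\[
U:=\{(u,v)\in C:\Vert u\Vert <r_{1},\ \Vert v\Vert <r_{2}\},
\]
whose openness follows from the continuity of the seminorms on $K$ guaranteed by~(\ref{eq:seminorm-cont}). If $N$ happens to have a fixed point on $\partial_{C}U$, that point already satisfies the required estimate, so without loss of generality assume there are none. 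It then suffices to prove $\mathrm{ind}_{C}(N,U)=0$: additivity and the existence property will then provide a fixed point in $C\setminus \overline{U}$ with $\left\vert u\right\vert \leq R_{1}$, $\left\vert v\right\vert \leq R_{2}$, and either $\Vert u\Vert >r_{1}$ or $\Vert v\Vert >r_{2}$.

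To compute $\mathrm{ind}_{C}(N,U)$ I would deform $N$ via the affine homotopy
\[
h_{t}(u,v):=(1-t)N(u,v)+t(R_{1}\phi_{1},R_{2}\phi_{2}),\quad t\in[0,1].
\]
Since $(R_{1}\phi_{1},R_{2}\phi_{2})\in C$ (because $\left\vert R_{i}\phi_{i}\right\vert =R_{i}$ and $K_{i}$ is a cone) and $C$ is convex, $h_{t}$ maps $\overline{U}$ into $C$ and is compact. Moreover, $\Vert R_{i}\phi_{i}\Vert =R_{i}\Vert \phi_{i}\Vert >r_{i}/\Vert \chi_{i}\Vert \geq r_{i}$ by~(\ref{5}) together with $\Vert \chi_{i}\Vert \leq 1$, so the endpoint $(R_{1}\phi_{1},R_{2}\phi_{2})$ lies outside $\overline{U}$ and normalization gives $\mathrm{ind}_{C}(h_{1},U)=0$. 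Homotopy invariance will then conclude the proof provided $h_{t}(u,v)\neq (u,v)$ for all $(u,v)\in \partial_{C}U$ and $t\in[0,1]$.

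I expect this last verification to be the principal technical obstacle, where the strength of hypothesis~(\ref{5}) has to be used sharply. Suppose, for contradiction, that $(u,v)\in \partial_{C}U$ satisfies $h_{t}(u,v)=(u,v)$ with, say, $\Vert u\Vert =r_{1}$ and $\Vert v\Vert \leq r_{2}$. Then $u=(1-t)N_{1}(u,v)+tR_{1}\phi_{1}$. The key move is to apply the cone inequalities $N_{1}(u,v)\geq \Vert N_{1}(u,v)\Vert \chi_{1}$ and $\phi_{1}\geq \Vert \phi_{1}\Vert \chi_{1}$ \emph{before} passing to the seminorm, combined with the lower bound~(\ref{6}), to obtain
\[
u\geq \bigl[(1-t)r_{1}/\Vert \chi_{1}\Vert +tR_{1}\Vert \phi_{1}\Vert \bigr]\chi_{1}.
\]
Applying the monotonicity of the seminorm and using $\Vert u\Vert =r_{1}$ yields $r_{1}\geq (1-t)r_{1}+tR_{1}\Vert \phi_{1}\Vert \Vert \chi_{1}\Vert$. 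Since $t=0$ is excluded by the assumption that $N$ has no fixed points on $\partial_{C}U$, one may cancel $t>0$ and reach $r_{1}\geq R_{1}\Vert \phi_{1}\Vert \Vert \chi_{1}\Vert$, which directly contradicts~(\ref{5}). The symmetric case $\Vert v\Vert =r_{2}$ is handled identically. Assembling these pieces and invoking additivity relative to the partition $U\sqcup (C\setminus \overline{U})$ of $C$ produces the desired fixed point.
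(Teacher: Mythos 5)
Your proposal is correct and follows essentially the same route as the paper's proof: the same invariance argument giving $\mathrm{ind}_{C}(N,C)=1$, the same open set $U$, the same homotopy to the point $(R_{1}\phi_{1},R_{2}\phi_{2})$, and the same contradiction obtained by pushing the cone inequalities through before applying the monotone seminorm. The only cosmetic difference is that you treat $t\in(0,1]$ in a single computation while the paper disposes of $t=1$ separately via $h\notin\overline{U}$; both are fine.
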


\begin{proof}
The assumption (\ref{4}) implies that $N(C)\subset C.$ Therefore, by
Proposition~\ref{fpi-pro}, $\mathrm{ind}_{C}(N,C)=1.$ Let
\begin{equation*}
U:=\left\{ \left( u,v\right) \in C:\Vert u\Vert <r_{1},\ \Vert v\Vert
<r_{2}\right\} .
\end{equation*}%
The boundary $\partial U$ of $U$ with respect to $C$ is equal to $\partial
U=A_{1}\cup A_{2},$ where
\begin{eqnarray*}
A_{1} &=&\left\{ \left( u,v\right) \in C:\left\Vert u\right\Vert =r_{1},\
\left\Vert v\right\Vert \leq r_{2}\right\} ,\  \\
A_{2} &=&\left\{ \left( u,v\right) \in C:\left\Vert u\right\Vert \leq
r_{1},\ \left\Vert v\right\Vert =r_{2}\right\} .
\end{eqnarray*}%
If there is a fixed point $(u,v)$ of $N$ on $\partial U,$ then $(u,v)$
satisfies the assertion. If not, the indices $\mathrm{ind}_{C}(N,U)$ and $%
\mathrm{ind}_{C}(N,C\setminus \overline{U})$ are well defined and their sum,
by the additivity property of the index, is equal to one. Therefore, it
suffices to prove that $\mathrm{ind}_{C}(N,U)=0.$ Take $h=(R_{1}\phi
_{1},R_{2}\phi _{2})\in C$ and consider the homotopy $H:C\times \left[ 0,1%
\right] \rightarrow C,$
\begin{equation*}
H\left( u,v,t\right) :=th+(1-t)N(u,v).
\end{equation*}%
We claim that $H$ is fixed point free on $\partial U$. Since
\begin{eqnarray}
\left\Vert R_{1}\phi _{1}\right\Vert &=&R_{1}\left\Vert \phi _{1}\right\Vert
\geq R_{1}\left\Vert \phi _{1}\right\Vert \left\Vert \chi _{1}\right\Vert
>r_{1},  \label{star} \\
\left\Vert R_{2}\phi _{2}\right\Vert &=&R_{2}\left\Vert \phi _{2}\right\Vert
\geq R_{2}\left\Vert \phi _{2}\right\Vert \left\Vert \chi _{2}\right\Vert
>r_{2},  \notag
\end{eqnarray}%
we have that $\left( u,v\right) \neq h=H\left( u,v,1\right) $ for all $%
\left( u,v\right) \in \partial U.$ It remains to show that $H\left(
u,v,t\right) \neq \left( u,v\right) $ for $\left( u,v\right) \in \partial U$
and $t\in \left( 0,1\right) .$ Assume the contrary. Then there exists $%
(u,v)\in A_{1}\cup A_{2}$ and $t\in (0,1)$ such that
\begin{equation}
(u,v)=th+(1-t)N(u,v).  \label{eq:homotopy not good}
\end{equation}%
Suppose that $(u,v)\in A_{1}.$ Then, exploiting the first coordinate of the
equation (\ref{eq:homotopy not good}), we obtain that%
\begin{eqnarray}
u &=&tR_{1}\phi _{1}+(1-t)N_{1}(u,v)\geq tR_{1}\Vert \phi _{1}\Vert \chi
_{1}+(1-t)\Vert N_{1}(u,v)\Vert \chi _{1}  \label{br0} \\
&\geq &\left( tR_{1}\Vert \phi _{1}\Vert +(1-t)\inf\limits_{\substack{ %
(u,v)\in C  \\ \Vert u\Vert =r_{1},\Vert v\Vert \leq r_{2}}}\Vert
N_{1}(u,v)\Vert \right) \chi _{1}  \notag \\
&\geq &\left( tR_{1}\Vert \phi _{1}\Vert +(1-t)\frac{r_{1}}{\Vert \chi
_{1}\Vert }\right) \chi _{1}.  \notag
\end{eqnarray}%
Using the monotonicity of $\Vert \cdot \Vert $ and (\ref{5}) we obtain that
\begin{eqnarray*}
r_{1} &\geq &\left( tR_{1}\Vert \phi _{1}\Vert +(1-t)\frac{r_{1}}{\Vert \chi
_{1}\Vert }\right) \left\Vert \chi _{1}\right\Vert \\
&=&tR_{1}\Vert \phi _{1}\Vert \left\Vert \chi _{1}\right\Vert +\left(
1-t\right) r_{1}>tr_{1}+\left( 1-t\right) r_{1}=r_{1},
\end{eqnarray*}%
which is impossible. Similarly we derive a contradiction if $\left(
u,v\right) \in A_{2}.$

By the homotopy invariance of the index we obtain that $\mathrm{ind}_C(N,U)=%
\mathrm{ind}_C(h,U).$ From (\ref{star}) we have $h\not\in \overline{U},$
hence $\mathrm{ind}_C(N,U)=\mathrm{ind}_C(h,U)=0,$ as we wished.
\end{proof}

\begin{rem}
We observe that, using the relation \eqref{eq:seminorm-cont}, a lower bound
for the solution in terms of the seminorm provides a lower bound for the
norm of the solution, namely
\begin{equation*}
\|u\|\geq r_1 \implies |u|\geq r_1|\chi_1|.
\end{equation*}
\end{rem}

In the next result we replace, in the spirit of Lemma~4 of \cite{gipp-na},
the assumption \eqref{6} with a different one. The two conditions are not
comparable and are used, in a combined way, in Theorem~\ref{thm:abstr3sols''}%
.

\begin{thm}
\label{thm:abstr1or A} Assume that there exist numbers $r_{i}> 0$ and $%
R_{i}>0$ with
\begin{equation}
r_{i}<\Vert \phi _{i}\Vert \Vert \chi _{i}\Vert R_{i}\ \ \ (i=1,2),
\end{equation}%
such that
\begin{equation}
\sup\limits_{(u,v)\in C}|N_{i}(u,v)|\leq R_{i}\ \ \ \ (i=1,2),  \label{4-4}
\end{equation}%
and
\begin{equation}
\inf_{(u,v)\in A} \|N_1(u,v)\| \geq r_1\ {\text{ or }}\ \inf_{(u,v)\in A}
\|N_2(u,v)\| \geq r_2,  \label{6 or A}
\end{equation}
where $A$ is a subset of the set
\begin{equation*}
U=\mbox{$\left\{(u,v)\in C:\;\|u\|< r_1,\ \|v\|< r_2\right\}$}.
\end{equation*}
Then $N$ has at least one fixed point $(u,v)\in K$ such that $|u|\leq R_{1},$
$|v|\leq R_{2}$ and $(u,v)\not\in A.$
\end{thm}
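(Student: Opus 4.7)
The plan is to notice that assumption~\eqref{6 or A} by itself prevents any fixed point of $N$ from lying in $A$, so the only thing I really need from fixed-point index theory is the existence of \emph{some} fixed point in the invariant set $C$.

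First I would verify $N(C)\subset C$: if $(u,v)\in C$, then $N(u,v)\in K$ by cone-invariance of $N$, and \eqref{4-4} gives $|N_i(u,v)|\leq R_i$ for $i=1,2$. Since $C$ is closed and convex in $X$ and $N|_C$ is compact ($N$ is completely continuous and $C$ is bounded), the remark following Proposition~\ref{fpi-pro} yields $\mathrm{ind}_C(N,C)=1$, so by the existence property $N$ admits a fixed point $(u^\star,v^\star)\in C$, which in particular satisfies $|u^\star|\leq R_1$ and $|v^\star|\leq R_2$.

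Next I would show that no fixed point of $N$ can belong to $A$. Suppose toward a contradiction that $(u,v)\in A$ with $N(u,v)=(u,v)$. Without loss of generality, by \eqref{6 or A}, $\|N_1(u,v)\|\geq r_1$, so $\|u\|=\|N_1(u,v)\|\geq r_1$. But $A\subset U$ forces $\|u\|<r_1$, a contradiction; the other case is symmetric. Hence $(u^\star,v^\star)\notin A$, and the theorem is proved.

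To the extent there is a main obstacle, it is simply to recognise that the homotopy-based index-vanishing argument used in the proof of Theorem~\ref{thm:abstr1} is unnecessary here: because the conclusion only asks for $(u,v)\notin A$ rather than a lower bound $\|u\|\ge r_1$ or $\|v\|\ge r_2$, assumption \eqref{6 or A} excludes fixed points from $A$ by a direct contradiction and no homotopy is needed. For the same reason, the inequality $r_i<\|\phi_i\|\|\chi_i\|R_i$ plays no actual role in this proof and is retained only for consistency with the companion statements, where it is essential for combined use in Theorem~\ref{thm:abstr3sols''}.
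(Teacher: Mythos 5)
Your proposal is correct and follows essentially the same route as the paper: the paper obtains the fixed point in $C$ via Schauder's theorem (equivalent to your use of $\mathrm{ind}_C(N,C)=1$), and then excludes $A$ by exactly the same contradiction $r_1>\Vert u\Vert=\Vert N_1(u,v)\Vert\geq r_1$. Your closing observations --- that no homotopy is needed and that the radius inequality $r_i<\Vert\phi_i\Vert\Vert\chi_i\Vert R_i$ is not used in this proof --- are both accurate.
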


\begin{proof}
Since $N$ is a completely continuous mapping in the bounded closed convex
set $C,$ by Schauder's fixed point theorem, it possesses a fixed point $%
(u,v)\in C.$ We now show that the fixed point is not in $A.$ Suppose on the
contrary that $(u,v)=N(u,v)$ and $(u,v)\in A.$ Suppose that the first
inequality from (\ref{6 or A}) is satisfied. Then
\begin{equation*}
r_{1}>\Vert u\Vert =\Vert N_{1}(u,v)\Vert \geq r_{1},
\end{equation*}%
which is impossible. Similarly we arrive at a contradiction, if the second
inequality from (\ref{6 or A}) is satisfied.
\end{proof}

The next theorems do not assume the invariance condition $N\left( C\right)
\subset C$ and use instead Leray-Schauder type conditions. The first result
requires the Leray-Schauder condition componentwise.

\begin{thm}
\label{thm:retr1} Assume that there exist numbers $r_{i}>0$ and $R_{i}>0$
with
\begin{equation}
r_{i}<\Vert \phi _{i}\Vert \Vert \chi _{i}\Vert R_{i}\ \ \ (i=1,2),
\end{equation}%
such that the strengthened condition \emph{(\ref{6})}:%
\begin{equation}
\begin{split}
\inf\limits_{\substack{ (u,v)\in C  \\ \Vert u\Vert =r_{1},\Vert v\Vert \leq
r_{2}}}\left( \max \left\{ \frac{|N_{1}(u,v)|}{R_{1}},1\right\} \right)
^{-1}\Vert N_{1}(u,v)\Vert & \geq \frac{r_{1}}{\Vert \chi _{1}\Vert },
\label{6-strong} \\
\inf\limits_{\substack{ (u,v)\in C  \\ \Vert u\Vert \leq r_{1},\Vert v\Vert
=r_{2}}}\left( \max \left\{ \frac{|N_{2}(u,v)|}{R_{2}},1\right\} \right)
^{-1}\Vert N_{2}(u,v)\Vert & \geq \frac{r_{2}}{\Vert \chi _{2}\Vert },
\end{split}%
\end{equation}%
and the weakened condition \emph{(\ref{4})}:
\begin{align}
|u|& =R_{1},\ |v|<R_{2}\ \ \text{implies\ }\ N(u,v)\neq (\lambda u,v),\
\text{\ for all }\lambda >1;  \notag \\
|u|& <R_{1},\ |v|=R_{2}\ \ \text{implies}\ \ N(u,v)\neq (u,\lambda v),\
\text{for\ all\ }\lambda >1;  \label{eq:lambda} \\
|u|& =R_{1},\ |v|=R_{2}\ \ \text{implies}\ \ N(u,v)\neq (\lambda
_{1}u,\lambda _{2}v),\ \text{for all }\lambda _{1},\lambda _{2}\geq 1\text{
with}\ \lambda _{1}\lambda _{2}>1,  \notag
\end{align}%
hold. Then $N$ has at least one fixed point $(u,v)\in K$ such that $|u|\leq
R_{1},$ $|v|\leq R_{2}$ and either $\Vert u\Vert \geq r_{1}$ or $\Vert
v\Vert \geq r_{2}.$
\end{thm}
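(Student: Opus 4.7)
The plan is to reduce the statement to Theorem~\ref{thm:abstr1} by composing $N$ with the radial retraction of $K$ onto the order-rectangle $C$. Define $\rho:K\to C$ by
\[
\rho(u,v):=\Bigl(\frac{u}{\max\{1,|u|/R_1\}},\;\frac{v}{\max\{1,|v|/R_2\}}\Bigr),
\]
which is a continuous retraction since each $K_i$ is a cone (the scalar factors lie in $(0,1]$). Set $\widetilde N:=\rho\circ N\colon C\to C$; this map is completely continuous, maps $C$ into itself by construction, and therefore $\mathrm{ind}_C(\widetilde N,C)=1$ by the normalization in Proposition~\ref{fpi-pro}.

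The first point to check is that every fixed point of $\widetilde N$ in $C$ is already a fixed point of $N$. Writing $m_i:=\max\{1,|N_i(u,v)|/R_i\}\geq 1$, a fixed point $(u,v)=\widetilde N(u,v)$ satisfies $N_1(u,v)=m_1 u$ and $N_2(u,v)=m_2 v$; moreover $m_i>1$ forces $|u|=R_1$ (resp.\ $|v|=R_2$), because then $|N_i(u,v)|=m_iR_i$. The three alternatives of the Leray--Schauder condition \eqref{eq:lambda} exclude the case $m_1m_2>1$, so $m_1=m_2=1$ and $(u,v)=N(u,v)$.

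The remaining step mirrors the proof of Theorem~\ref{thm:abstr1} with $N$ replaced by $\widetilde N$. Taking $U$, $A_1$, $A_2$ as in that proof and $h=(R_1\phi_1,R_2\phi_2)$, I consider the homotopy $H(u,v,t)=th+(1-t)\widetilde N(u,v)$ in $C$. The inequality \eqref{star} still rules out $t=1$ on $\partial U$. For $t\in(0,1)$ and $(u,v)\in A_1$, the first coordinate reads
\[
u \;=\; tR_1\phi_1 + (1-t)\,\frac{N_1(u,v)}{m_1},
\]
and the Harnack property $N_1(u,v)\geq\|N_1(u,v)\|\chi_1$ combined with \eqref{6-strong}, whose prefactor $\bigl(\max\{1,|N_1(u,v)|/R_1\}\bigr)^{-1}$ is exactly $1/m_1$, yields $\tfrac{1}{m_1}\|N_1(u,v)\|\geq r_1/\|\chi_1\|$. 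Applying $\|\cdot\|$ to the displayed identity and invoking \eqref{5} reproduces verbatim the contradiction $r_1>r_1$ from the proof of Theorem~\ref{thm:abstr1}; the case $(u,v)\in A_2$ is symmetric. Hence $\mathrm{ind}_C(\widetilde N, U)=\mathrm{ind}_C(h, U)=0$, and by additivity $\widetilde N$ has a fixed point in $C\setminus\overline U$, which by the previous paragraph is the desired fixed point of $N$.

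The delicate point I expect is precisely the compatibility of the retraction $\rho$ with the Harnack-type inequality: the damping factor $1/m_i$ introduced by $\rho$ in front of $N_i(u,v)$ weakens any lower bound derived from \eqref{6}, and this is exactly why \eqref{6-strong}, which carries the compensating factor $\bigl(\max\{1,|N_i(u,v)|/R_i\}\bigr)^{-1}$, is imposed in place of \eqref{6}.
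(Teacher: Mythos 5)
Your proof is correct and follows essentially the same route as the paper: the retraction you call $\rho$ is exactly the componentwise retraction $\pi$ used in the paper's proof, the verification that fixed points of $\widetilde N=\pi\circ N$ are fixed points of $N$ via the three alternatives of \eqref{eq:lambda} matches the paper's case analysis, and your index computation simply inlines the proof of Theorem~\ref{thm:abstr1} (with \eqref{6-strong} compensating the damping factor $1/m_i$) where the paper invokes that theorem directly. No gaps.
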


\begin{proof}
Consider the retraction $\pi \colon K\rightarrow C,$ $\pi (u,v)=(\pi
_{1}(u),\pi _{2}(v)),$ where
\begin{equation*}
\pi _{i}\colon K_{i}\rightarrow C_{i}:=\left\{ u\in K_{i}:\left\vert
u\right\vert \leq R_{i}\right\} ,\ \ \pi _{i}(u)=%
\begin{cases}
u & \text{if\ }\left\vert u\right\vert \leq R_{i} \\
\frac{R_{i}}{|u|}u & \text{if\ }\left\vert u\right\vert \geq R_{i}.%
\end{cases}%
\end{equation*}%
Now we define the operator $\tilde{N}$ by the formula
\begin{equation*}
\tilde{N}(u,v)=\pi (N(u,v))=(\pi _{1}(N_{1}(u,v)),\pi _{2}(N_{2}(u,v))).
\end{equation*}%
Then $\tilde{N}(C)\subset C,$ i.e. $\tilde{N}$ satisfies (\ref{4}). Now, let
$(u,v)\in C$ be such that $\Vert u\Vert =r_{1},\ \Vert v\Vert \leq r_{2}.$
Observe that
\begin{equation*}
\tilde{N}_{1}(u,v)=\pi _{1}(N_{1}(u,v))=\left( \max \left\{ \frac{%
|N_{1}(u,v)|}{R_{1}},1\right\} \right) ^{-1}N_{1}(u,v),
\end{equation*}%
which in view of (\ref{6-strong}) shows that
\begin{equation*}
\Vert \tilde{N}_{1}(u,v)\Vert \geq \frac{r_{1}}{\Vert \chi _{1}\Vert }.
\end{equation*}%
A similar estimate holds for $\tilde{N}_{2},$ which shows that $\tilde{N}$
satisfies (\ref{6}). By Theorem \ref{thm:abstr1} we obtain a fixed point $%
(u,v)$ of $\tilde{N}$ in the set $C\setminus U.$ Therefore $\pi
(N(u,v))=(u,v).$ If $\left\vert N_{1}\left( u,v\right) \right\vert \leq
R_{1},$ then $\pi _{1}\left( N_{1}\left( u,v\right) \right) =N_{1}\left(
u,v\right) $ and so $N_{1}\left( u,v\right) =u.$ Similarly, if $\left\vert
N_{2}\left( u,v\right) \right\vert \leq R_{2},$ then $N_{2}\left( u,v\right)
=v,$ and the proof is complete. Otherwise, we have $\left\vert N_{1}\left(
u,v\right) \right\vert >R_{1}$ or $\left\vert N_{2}\left( u,v\right)
\right\vert >R_{2}.$ Assume $\left\vert N_{1}\left( u,v\right) \right\vert
>R_{1}.$ Then
\begin{equation*}
\pi _{1}\left( N_{1}\left( u,v\right) \right) =\frac{R_{1}}{\left\vert
N_{1}\left( u,v\right) \right\vert }N_{1}\left( u,v\right) =u.
\end{equation*}%
It follows that $N_{1}\left( u,v\right) =\frac{\left\vert N_{1}\left(
u,v\right) \right\vert }{R_{1}}u$ and $|u|=R_{1}$ which, in view of the
first implication from (\ref{eq:lambda}), is impossible if $\left\vert
v\right\vert <R_{2}.$ Hence $\left\vert v\right\vert =R_{2},$ which implies
that $N_{2}\left( u,v\right) =\lambda _{2}v$ for some $\lambda _{2}\geq 1.$
Since $N_{1}\left( u,v\right) =\lambda _{1}u,$ where $\lambda _{1}=$ $\frac{%
\left\vert N_{1}\left( u,v\right) \right\vert }{R_{1}}>1$ and $\left\vert
u\right\vert =R_{1},$ we contradict the third condition from (\ref{eq:lambda}%
). Thus the inequality $\left\vert N_{1}\left( u,v\right) \right\vert >R_{1}$
is not possible. By symmetry, the inequality $\left\vert N_{2}\left(
u,v\right) \right\vert >R_{2}$ is also not possible. Consequently, $\pi
(N(u,v))=N\left( u,v\right) =\left( u,v\right) .$
\end{proof}

\begin{rem}
Notice that under condition (\ref{4}), (\ref{eq:lambda}) is satisfied and (%
\ref{6-strong}) reduces to (\ref{6}).
\end{rem}

If instead of the retraction $\pi ,$ we consider the retraction $\rho
:K\rightarrow C$ given by
\begin{equation*}
\rho (u,v)=\left( \max \left\{ \frac{\left\vert u\right\vert }{R_{1}},\frac{%
\left\vert v\right\vert }{R_{2}},1\right\} \right) ^{-1}(u,v),
\end{equation*}%
we obtain the following existence theorem under the Leray-Schauder condition
acting this time, uniformly on the two components $u,v.$

\begin{thm}
\label{thm:retr2} Assume that there exist numbers $r_{i}>0$ and $R_{i}>0$
with
\begin{equation}
r_{i}<\Vert \phi _{i}\Vert \Vert \chi _{i}\Vert R_{i}\ \ \ (i=1,2),
\end{equation}%
such that the strengthened condition \emph{(\ref{6})}:
\begin{equation}
\begin{split}
\inf\limits_{\substack{ (u,v)\in C  \\ \Vert u\Vert =r_{1},\Vert v\Vert \leq
r_{2}}}\left( \max \left\{ \frac{|N_{1}(u,v)|}{R_{1}},\frac{|N_{2}(u,v)|}{%
R_{2}},1\right\} \right) ^{-1}\Vert N_{1}(u,v)\Vert & \geq \frac{r_{1}}{%
\Vert \chi _{1}\Vert },  \label{6-stronger} \\
\inf\limits_{\substack{ (u,v)\in C  \\ \Vert u\Vert \leq r_{1},\Vert v\Vert
=r_{2}}}\left( \max \left\{ \frac{|N_{1}(u,v)|}{R_{1}},\frac{|N_{2}(u,v)|}{%
R_{2}},1\right\} \right) ^{-1}\Vert N_{2}(u,v)\Vert & \geq \frac{r_{2}}{%
\Vert \chi _{2}\Vert },
\end{split}%
\end{equation}%
and the weakened condition \emph{(\ref{4})}:%
\begin{equation}
N(u,v)\neq \lambda (u,v),\ \ \ \text{for \ }(u,v)\in \partial C,\ \lambda >1,
\label{eq:lambda2}
\end{equation}%
hold. Then $N$ has at least one fixed point $(u,v)\in K$ such that $|u|\leq
R_{1},$ $|v|\leq R_{2}$ and either $\Vert u\Vert \geq r_{1}$ or $\Vert
v\Vert \geq r_{2}.$
\end{thm}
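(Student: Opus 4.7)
The plan is to mimic the proof of Theorem \ref{thm:retr1}, replacing the componentwise retraction $\pi$ by the uniform retraction $\rho \colon K \to C$ given in the statement. First I would check that $\rho$ is well-defined and continuous: the scalar $m(u,v) := \max\{|u|/R_{1},\, |v|/R_{2},\, 1\}$ is continuous and at least $1$, and for $(u,v)\in K$ one has $u/m \geq (\|u\|/m)\chi_{1} = \|u/m\|\chi_{1}$ (and analogously in the second coordinate), so $\rho(u,v)\in C$. Setting $\tilde{N}:=\rho\circ N$ yields a completely continuous map $\tilde{N}\colon C\rightarrow C$, so $\tilde{N}$ satisfies the invariance condition (\ref{4}) required in Theorem \ref{thm:abstr1}.

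Next I would verify the boundary estimate (\ref{6}) for $\tilde{N}$. Writing $M(u,v):=\max\{|N_{1}(u,v)|/R_{1},\, |N_{2}(u,v)|/R_{2},\, 1\}$, one has $\tilde{N}_{i}(u,v) = M(u,v)^{-1}N_{i}(u,v)$ for $i=1,2$, so
\begin{equation*}
\|\tilde{N}_{i}(u,v)\| \;=\; M(u,v)^{-1}\|N_{i}(u,v)\|,
\end{equation*}
and the strengthened hypothesis (\ref{6-stronger}) translates exactly into (\ref{6}) for $\tilde{N}$. Theorem \ref{thm:abstr1} then produces a fixed point $(u,v)\in C$ of $\tilde{N}$ with $|u|\leq R_{1}$, $|v|\leq R_{2}$, and either $\|u\|\geq r_{1}$ or $\|v\|\geq r_{2}$.

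The final step is to upgrade this fixed point of $\tilde{N}$ to a fixed point of $N$. The equation $\tilde{N}(u,v)=(u,v)$ reads $N(u,v)=M(u,v)\cdot(u,v)$. If $M(u,v)=1$ we are done. Otherwise $M(u,v)>1$, which forces the maximum in the definition of $M(u,v)$ to be attained by one of the first two arguments; combined with $N_{1}(u,v)=M(u,v)\,u$ and $N_{2}(u,v)=M(u,v)\,v$, this yields $|u|=R_{1}$ or $|v|=R_{2}$, i.e.\ $(u,v)\in\partial C$. Setting $\lambda := M(u,v)>1$ then gives $N(u,v)=\lambda(u,v)$, contradicting the Leray--Schauder condition (\ref{eq:lambda2}).

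The main obstacle I expect is the last step: one must observe that whenever the retraction is \emph{activated} (that is, $M(u,v)>1$), the fixed point of $\tilde{N}$ is automatically on $\partial C$, making (\ref{eq:lambda2}) applicable. This is the analogue of the delicate case analysis in the proof of Theorem \ref{thm:retr1}, but becomes substantially cleaner here because the single scalar $M(u,v)$ encodes the scalings of both components simultaneously, eliminating the need to treat the three scenarios $|u|=R_{1}$, $|v|=R_{2}$, and $|u|=R_{1}=|v|/R_{2}\cdot R_{2}$ separately.
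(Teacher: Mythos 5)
Your proposal is correct and follows essentially the same route as the paper: retract $N$ by the uniform scaling $\rho$, check that the strengthened condition \eqref{6-stronger} hands condition \eqref{6} to $\tilde N=\rho\circ N$, invoke Theorem \ref{thm:abstr1}, and then observe that if the retraction is activated the fixed point lies on $\partial C$ with $N(u,v)=\lambda(u,v)$ for $\lambda>1$, contradicting \eqref{eq:lambda2}. The only difference is that you spell out a few routine verifications (continuity of $\rho$, cone-invariance via homogeneity of the seminorm) that the paper leaves implicit.
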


\begin{proof}
The assumption (\ref{6-stronger}) guarantees that the mapping $\bar{N}%
(u,v)=\rho (N(u,v)),$ that has the property $\bar{N}(C)\subset C,$ also
satisfies condition (\ref{6}). By Theorem \ref{thm:abstr1}, $\bar{N}$ has a
fixed point $\left( u,v\right) $ in $C\setminus U.$ If $N\left( u,v\right)
\in C,$ then $\rho \left( N\left( u,v\right) \right) =N\left( u,v\right) $
and we are done. Otherwise, $\left\vert N_{1}\left( u,v\right) \right\vert
>R_{1}$ or $\left\vert N_{2}\left( u,v\right) \right\vert >R_{2},$ and then $%
\lambda :=\max \left\{ \frac{\left\vert N_{1}\left( u,v\right) \right\vert }{%
R_{1}},\frac{\left\vert N_{2}\left( u,v\right) \right\vert }{R_{2}}%
,1\right\} >1,$ $N\left( u,v\right) =\lambda \left( u,v\right) $ and $\left(
u,v\right) \in \partial C,$ which is excluded by (\ref{eq:lambda2}).
\end{proof}

\begin{rem}
Note that, in the case of one equation, the retractions $\pi$ and $\rho$
coincide with the one used in \cite[Theorem 2.1]{b:RP_TMNA}, but are used
differently.
\end{rem}

\subsection{Solutions with both nonzero components}

In the previous results one of the components of the solution can be $0.$
The next three theorems avoid this situation.

\begin{thm}
\label{thm:abstr1'} Assume that there exist numbers $r_{i}> 0$ and $R_{i}>0$
with
\begin{equation}
r_{i}<\Vert \phi _{i}\Vert \Vert \chi _{i}\Vert R_{i}\ \ \ (i=1,2),
\end{equation}%
such that
\begin{equation}
\inf\limits_{\substack{ (u,v)\in C  \\ \Vert u\Vert =r_{1},\text{\ }%
\left\Vert v\right\Vert \geq r_{2}}}\Vert N_{1}(u,v)\Vert \geq \frac{r_{1}}{%
\Vert \chi _{1}\Vert },\ \ \ \inf\limits_{\substack{ (u,v)\in C  \\ %
\left\Vert u\right\Vert \geq r_{1},\ \Vert v\Vert =r_{2}}}\Vert
N_{2}(u,v)\Vert \geq \frac{r_{2}}{\Vert \chi _{2}\Vert },  \label{6'}
\end{equation}%
and%
\begin{equation}
\sup\limits_{(u,v)\in C}|N_{i}(u,v)|\leq R_{i}\ \ \ \ (i=1,2).  \label{4bis}
\end{equation}
Then $N$ has at least one fixed point $(u,v)\in K$ such that $|u|\leq R_{1},$
$|v|\leq R_{2},$ $\Vert u\Vert \geq r_{1}$ {and} $\Vert v\Vert \geq r_{2}.$
\end{thm}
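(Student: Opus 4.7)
The plan is to follow the scheme of the proof of Theorem~\ref{thm:abstr1} while modifying the open set $U$ so that its complement in $C$ corresponds exactly to pairs with both seminorms bounded below. Concretely, I would set
\[
U := \{(u,v)\in C : \|u\|<r_1 \text{ or } \|v\|<r_2\},
\]
whose boundary in $C$ decomposes as $\partial U = B_1\cup B_2$, where $B_1 = \{(u,v)\in C : \|u\|=r_1,\ \|v\|\geq r_2\}$ and $B_2 = \{(u,v)\in C : \|u\|\geq r_1,\ \|v\|=r_2\}$. The critical observation is that this decomposition is matched exactly by (\ref{6'}): the strengthening from $\|v\|\leq r_2$ to $\|v\|\geq r_2$ in the first infimum (and symmetrically in the second) is tailored to the new boundary.

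From (\ref{4bis}) we still have $N(C)\subset C$, so Proposition~\ref{fpi-pro} gives $\mathrm{ind}_C(N,C)=1$. If $N$ already has a fixed point on $\partial U = B_1\cup B_2$, such a point satisfies $\|u\|\geq r_1$ and $\|v\|\geq r_2$ and we are done. Otherwise, I would reuse the homotopy $H(u,v,t) = th+(1-t)N(u,v)$ with $h=(R_1\phi_1,R_2\phi_2)$. The estimates (\ref{star}) show that $\|R_i\phi_i\|>r_i$, so $h\notin\overline U$ and therefore $\mathrm{ind}_C(h,U)=0$.

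The main step is to verify that $H$ is fixed-point free on $\partial U$ for $t\in(0,1]$. For $(u,v)\in B_1$, suppose $(u,v)=H(u,v,t)$; then the first coordinate gives $u=tR_1\phi_1+(1-t)N_1(u,v)$, and since (\ref{6'}) applies (precisely because $\|u\|=r_1$ and $\|v\|\geq r_2$), the same chain of inequalities carried out in (\ref{br0}) leads to the contradiction $r_1>r_1$. The case $(u,v)\in B_2$ is symmetric. By homotopy invariance, $\mathrm{ind}_C(N,U)=\mathrm{ind}_C(h,U)=0$, and by additivity, $\mathrm{ind}_C(N,C\setminus\overline U)=1$, yielding a fixed point of $N$ in $C\setminus\overline U$, which automatically satisfies $\|u\|\geq r_1$ and $\|v\|\geq r_2$.

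The principal difficulty is conceptual rather than technical: one must recognize that the strengthened localization in (\ref{6'}) is not a small perturbation but is dictated by the new partition of $\partial U$ obtained from excluding the union $\{\|u\|<r_1\}\cup\{\|v\|<r_2\}$ instead of the intersection. Once this matching is in place, the index-theoretic machinery of Theorem~\ref{thm:abstr1} carries over with essentially no modification.
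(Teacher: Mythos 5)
Your proposal is correct and follows essentially the same route as the paper's own proof: the set you call $U$ is exactly the set $V$ in the paper, with the same boundary decomposition $B_1\cup B_2$, the same homotopy to $h=(R_1\phi_1,R_2\phi_2)$, and the same contradiction $r_1>r_1$ derived from (\ref{6'}). Nothing further is needed.
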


\begin{proof}
As before, the assumption \eqref{4bis} implies that $N(C)\subset C.$ Thus, $%
\mathrm{ind}_C(N,C)=1.$ In order to finish the proof, it is sufficient to
show that $\mathrm{ind}_C(N,V)=0,$ where
\begin{equation*}
V:=\left\{ \left( u,v\right) \in C:\left\Vert u\right\Vert <r_{1}\ \text{or\
}\left\Vert v\right\Vert <r_{2}\right\} .
\end{equation*}%
We have $\partial V=B_{1}\cup B_{2},$ where
\begin{eqnarray*}
B_{1} &=&\left\{ \left( u,v\right) \in C:\left\Vert u\right\Vert =r_{1},\
\left\Vert v\right\Vert \geq r_{2}\right\} , \\
B_{2} &=&\left\{ \left( u,v\right) \in C:\left\Vert u\right\Vert \geq
r_{1},\ \left\Vert v\right\Vert =r_{2}\right\} .
\end{eqnarray*}%
If $N$ has a fixed point $\left( u,v\right) \in \partial V,$ we are
finished. Thus, assume that $N\left( u,v\right) \neq \left( u,v\right) $ for
$\left( u,v\right) \in \partial V,$ and consider the same homotopy as in the
proof of Theorem \ref{thm:abstr1}, that is
\begin{equation*}
H\left( u,v,t\right) =th+(1-t)N(u,v),\ \text{ where }\ h=(R_{1}\phi
_{1},R_{2}\phi _{2}).
\end{equation*}%
We claim that $H$ is fixed point free on $\partial V.$ From the previous
assumption, $H\left( u,v,0\right) \neq \left( u,v\right) $ for $\left(
u,v\right) \in \partial V.$ Also, as in the proof of Theorem \ref{thm:abstr1}%
, $H\left( u,v,1\right) =h\neq \left( u,v\right) $ on $\partial V.$ It
remains to show that $H\left( u,v,t\right) \neq \left( u,v\right) $ for $%
\left( u,v\right) \in \partial V$ and $t\in \left( 0,1\right) .$ Assume the
contrary. Then, there exists $(u,v)\in B_{1}\cup B_{2}$ and $t\in (0,1)$
such that
\begin{equation}
(u,v)=th+(1-t)N(u,v).  \label{eq:homotopy not good'}
\end{equation}%
Suppose that $(u,v)\in B_{1}.$ Then, exploiting the first coordinate of the
equation (\ref{eq:homotopy not good'}), we obtain that
\begin{align}
u& =tR_{1}\phi _{1}+(1-t)N_{1}(u,v)\geq tR_{1}\Vert \phi _{1}\Vert \chi
_{1}+(1-t)\Vert N_{1}(u,v)\Vert \chi _{1}  \notag \\
& \geq \left( tR_{1}\Vert \phi _{1}\Vert +(1-t)\inf\limits_{\substack{ %
(u,v)\in C  \\ \Vert u\Vert =r_{1},\Vert v\Vert \geq r_{2}}}\Vert
N_{1}(u,v)\Vert \right) \chi _{1}\geq \left( tR_{1}\Vert \phi _{1}\Vert
+(1-t)\frac{r_{1}}{\Vert \chi _{1}\Vert }\right) \chi _{1}
\end{align}%
thus, as in the proof of Theorem \ref{thm:abstr1}, we obtain the
contradiction $r_{1}>r_{1}.$ The case $\left( u,v\right) \in B_{2}$ is
similar. Therefore $\mathrm{ind}_C(N,V)=\mathrm{ind}_C(h,V)=0$ since in view
of (\ref{star}), $h\notin \overline{V}.$
\end{proof}

Theorem \ref{thm:abstr1'} can be generalized in the spirit of Theorems \ref%
{thm:retr1} and \ref{thm:retr2}.

\begin{thm}
\label{thm:retr1'} Assume that there exist numbers $r_{i}>0$ and $R_{i}>0$
with
\begin{equation}
r_{i}<\Vert \phi _{i}\Vert \Vert \chi _{i}\Vert R_{i}\ \ \ (i=1,2),
\end{equation}%
such that
\begin{equation}
\begin{split}
\inf\limits_{\substack{ (u,v)\in C  \\ \Vert u\Vert =r_{1},\Vert v\Vert \geq
r_{2}}}\left( \max \left\{ \frac{|N_{1}(u,v)|}{R_{1}},1\right\} \right)
^{-1}\Vert N_{1}(u,v)\Vert & \geq \frac{r_{1}}{\Vert \chi _{1}\Vert }, \\
\inf\limits_{\substack{ (u,v)\in C  \\ \Vert u\Vert \geq r_{1},\Vert v\Vert
=r_{2}}}\left( \max \left\{ \frac{|N_{2}(u,v)|}{R_{2}},1\right\} \right)
^{-1}\Vert N_{2}(u,v)\Vert & \geq \frac{r_{2}}{\Vert \chi _{2}\Vert },
\end{split}
\label{6-strong'}
\end{equation}%
and
\begin{align}
|u|& =R_{1},\ |v|<R_{2}\ \ \text{implies\ }\ N(u,v)\neq (\lambda u,v),\
\text{\ for all }\lambda >1;  \notag \\
|u|& <R_{1},\ |v|=R_{2}\ \ \text{implies}\ \ N(u,v)\neq (u,\lambda v),\
\text{for\ all\ }\lambda >1;  \label{eq:lambda-bis} \\
|u|& =R_{1},\ |v|=R_{2}\ \ \text{implies}\ \ N(u,v)\neq (\lambda
_{1}u,\lambda _{2}v),\ \text{for all }\lambda _{1},\lambda _{2}\geq 1\text{
with}\ \lambda _{1}\lambda _{2}>1.  \notag
\end{align}%
Then $N$ has at least one fixed point $(u,v)\in K$ such that $|u|\leq R_{1},$
$|v|\leq R_{2},$ $\Vert u\Vert \geq r_{1}$ {and} $\Vert v\Vert \geq r_{2}.$
\end{thm}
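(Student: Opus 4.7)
The plan is to reduce Theorem \ref{thm:retr1'} to Theorem \ref{thm:abstr1'} by exactly the retraction device used in the proof of Theorem \ref{thm:retr1}. Let $\pi\colon K\to C$ be the componentwise radial retraction $\pi(u,v)=(\pi_1(u),\pi_2(v))$, where $\pi_i(u)=u$ if $|u|\le R_i$ and $\pi_i(u)=(R_i/|u|)u$ otherwise, and set $\tilde N:=\pi\circ N$. Since the hypotheses of Theorem \ref{thm:retr1'} differ from those of Theorem \ref{thm:abstr1'} in exactly the same way that the hypotheses of Theorem \ref{thm:retr1} differ from those of Theorem \ref{thm:abstr1}, we expect $\tilde N$ to satisfy the hypotheses of Theorem \ref{thm:abstr1'}, and we expect the Leray--Schauder condition \eqref{eq:lambda-bis} to force any fixed point of $\tilde N$ to be a genuine fixed point of $N$.

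In the first step I would verify the hypotheses of Theorem \ref{thm:abstr1'} for $\tilde N$. By construction $|\tilde N_i(u,v)|\le R_i$, so \eqref{4bis} holds. The key identity is
\[
\tilde N_i(u,v)=\Bigl(\max\bigl\{|N_i(u,v)|/R_i,\,1\bigr\}\Bigr)^{-1}N_i(u,v),
\]
which, since the coefficient is a nonnegative scalar and $\|\cdot\|$ is a seminorm, gives $\|\tilde N_i(u,v)\|=(\max\{|N_i(u,v)|/R_i,1\})^{-1}\|N_i(u,v)\|$. The infima in \eqref{6-strong'} are taken over exactly the sets that appear in \eqref{6'}, so this identity turns \eqref{6-strong'} for $N$ into \eqref{6'} for $\tilde N$. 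Theorem \ref{thm:abstr1'} then yields $(u,v)\in C$ with $\tilde N(u,v)=(u,v)$, $|u|\le R_1$, $|v|\le R_2$, $\|u\|\ge r_1$ and $\|v\|\ge r_2$.

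The second step is to lift the fixed point of $\tilde N$ to one of $N$ using \eqref{eq:lambda-bis}. If $|N_i(u,v)|\le R_i$ for both $i$ then $\pi$ acts as the identity on $N(u,v)$ and we are done. Otherwise, assume $|N_1(u,v)|>R_1$; then $u=\pi_1(N_1(u,v))=(R_1/|N_1(u,v)|)\,N_1(u,v)$, so $|u|=R_1$ and $N_1(u,v)=\lambda_1 u$ with $\lambda_1=|N_1(u,v)|/R_1>1$. The first line of \eqref{eq:lambda-bis} excludes the case $|v|<R_2$, so $|v|=R_2$; but then $N_2(u,v)=\lambda_2 v$ with $\lambda_2\ge 1$ (either $\lambda_2=1$ from $\pi_2$ acting trivially, or $\lambda_2>1$ from the formula for $\pi_2$), and in either case $\lambda_1\lambda_2>1$, contradicting the third line of \eqref{eq:lambda-bis}. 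By symmetry $|N_2(u,v)|>R_2$ is likewise impossible, so $N(u,v)=(u,v)$ with the required bounds inherited from Theorem \ref{thm:abstr1'}. The only delicate point in the whole argument is checking that the infima in \eqref{6-strong'} really do correspond to the sets $\{\|u\|=r_1,\|v\|\ge r_2\}$ and $\{\|u\|\ge r_1,\|v\|=r_2\}$ appearing in \eqref{6'}; once this bookkeeping is settled the rest is a verbatim transcription of the proof of Theorem \ref{thm:retr1}.
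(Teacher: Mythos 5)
Your proposal is correct and follows essentially the same route as the paper: the paper's proof also forms $\tilde N=\pi\circ N$, checks that \eqref{6-strong'} gives \eqref{6'} for $\tilde N$ via the identity $\tilde N_i(u,v)=(\max\{|N_i(u,v)|/R_i,1\})^{-1}N_i(u,v)$, applies Theorem \ref{thm:abstr1'}, and then invokes the argument from the proof of Theorem \ref{thm:retr1} to conclude from \eqref{eq:lambda-bis} that the fixed point of $\tilde N$ is a fixed point of $N$. Your write-up merely makes explicit the lifting step that the paper cites by reference.
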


\begin{proof}
Define the retractions $\pi _{i}$ and the operator $\tilde{N}$ as in the
proof of Theorem \ref{thm:retr1}. Then $\tilde{N}(C)\subset C,$ i.e. $\tilde{%
N}$ satisfies (\ref{4bis}). Now, let $(u,v)\in C$ be such that $\Vert u\Vert
=r_{1},$ $\Vert v\Vert \geq r_{2}.$ Observe that
\begin{equation*}
\tilde{N}_{1}(u,v)=\pi _{1}(N_{1}(u,v))=\left( \max \left\{ \frac{%
|N_{1}(u,v)|}{R_{1}},1\right\} \right) ^{-1}N_{1}(u,v),
\end{equation*}%
which shows that
\begin{equation*}
\Vert \tilde{N}_{1}(u,v)\Vert \geq \frac{r_{1}}{\Vert \chi _{1}\Vert }.
\end{equation*}%
A similar estimate holds for $\tilde{N}_{2}$ which shows that $\tilde{N}$
satisfies (\ref{6'}). By Theorem \ref{thm:abstr1'} we obtain a fixed point $%
(u,v)$ of $\tilde{N}$ in the set $C\setminus V.$ Therefore $\pi
(N(u,v))=(u,v).$ In the proof of Theorem \ref{thm:retr1} it was shown that (%
\ref{eq:lambda-bis}) yields $N(u,v)=(u,v).$
\end{proof}

Similarly, using the method presented in the proof of Theorem \ref{thm:retr2}
and exploiting Theorem \ref{thm:abstr1'}, we obtain the following fact.

\begin{thm}
\label{thm:retr2'}

Assume that there exist numbers $r_{i}>0$ and $R_{i}>0$ with
\begin{equation}
r_{i}<\Vert \phi _{i}\Vert \Vert \chi _{i}\Vert R_{i}\ \ \ (i=1,2),
\end{equation}%
such that
\begin{equation}
\begin{split}
\inf\limits_{\substack{ (u,v)\in C  \\ \Vert u\Vert =r_{1},\Vert v\Vert \geq
r_{2}}}\left( \max \left\{ \frac{|N_{1}(u,v)|}{R_{1}},\frac{|N_{2}(u,v)|}{%
R_{2}},1\right\} \right) ^{-1}\Vert N_{1}(u,v)\Vert & \geq \frac{r_{1}}{%
\Vert \chi _{1}\Vert }, \\
\inf\limits_{\substack{ (u,v)\in C  \\ \Vert u\Vert \geq r_{1},\Vert v\Vert
=r_{2}}}\left( \max \left\{ \frac{|N_{1}(u,v)|}{R_{1}},\frac{|N_{2}(u,v)|}{%
R_{2}},1\right\} \right) ^{-1}\Vert N_{2}(u,v)\Vert & \geq \frac{r_{2}}{%
\Vert \chi _{2}\Vert },
\end{split}
\label{6-stronger'}
\end{equation}%
and
\begin{equation}
N(u,v)\neq \lambda (u,v),\ \ \ \text{for \ }(u,v)\in \partial C,\ \lambda >1.
\label{eq:lambda2-bis}
\end{equation}%
Then $N$ has at least one fixed point $(u,v)\in K$ such that $|u|\leq R_{1},$
$|v|\leq R_{2},$ $\Vert u\Vert \geq r_{1}$ {and} $\Vert v\Vert \geq r_{2}.$
\end{thm}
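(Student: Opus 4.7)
The plan is to follow exactly the strategy of Theorem \ref{thm:retr2}, but invoking Theorem \ref{thm:abstr1'} in place of Theorem \ref{thm:abstr1} in order to obtain the stronger localization $\Vert u\Vert\geq r_1$ \emph{and} $\Vert v\Vert\geq r_2$ for the fixed point.

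First, I would introduce the retraction
\begin{equation*}
\rho(u,v)=\left(\max\left\{\tfrac{|u|}{R_1},\tfrac{|v|}{R_2},1\right\}\right)^{-1}(u,v)
\end{equation*}
from $K$ onto $C$, and define the modified operator $\bar{N}(u,v):=\rho(N(u,v))$. By construction $\bar{N}(C)\subset C$, so hypothesis \eqref{4bis} of Theorem \ref{thm:abstr1'} holds for $\bar{N}$. The key routine check is that the strengthened Harnack-type condition \eqref{6-stronger'} translates into the plain condition \eqref{6'} for $\bar{N}$: indeed, if $\Vert u\Vert=r_1$ and $\Vert v\Vert\geq r_2$, then
\begin{equation*}
\bar{N}_1(u,v)=\left(\max\left\{\tfrac{|N_1(u,v)|}{R_1},\tfrac{|N_2(u,v)|}{R_2},1\right\}\right)^{-1}N_1(u,v),
\end{equation*}
whose seminorm is bounded below by $r_1/\Vert\chi_1\Vert$ thanks to the first inequality in \eqref{6-stronger'}; the analogous estimate for $\bar{N}_2$ is obtained from the second one.

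Applying Theorem \ref{thm:abstr1'} to $\bar{N}$ yields a fixed point $(u,v)\in C$ with $\Vert u\Vert\geq r_1$ and $\Vert v\Vert\geq r_2$, that is, $\rho(N(u,v))=(u,v)$. The final step is to upgrade this to a fixed point of $N$ itself. If $N(u,v)\in C$, then the rescaling factor in $\rho$ equals $1$, and so $N(u,v)=(u,v)$. Otherwise $|N_1(u,v)|>R_1$ or $|N_2(u,v)|>R_2$; setting $\lambda:=\max\{|N_1(u,v)|/R_1,|N_2(u,v)|/R_2,1\}>1$ we get $N(u,v)=\lambda(u,v)$, and since $\rho(N(u,v))=(u,v)\in C$ has the property that at least one of $|u|=R_1$ or $|v|=R_2$ must hold (that is, $(u,v)\in\partial C$), this directly contradicts the Leray--Schauder condition \eqref{eq:lambda2-bis}. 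Hence this alternative is ruled out and $N(u,v)=(u,v)$.

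The only step that requires genuine attention is verifying that \eqref{6-stronger'} really does imply \eqref{6'} for $\bar{N}$ on the sets $\{\Vert u\Vert=r_1,\ \Vert v\Vert\geq r_2\}$ and $\{\Vert u\Vert\geq r_1,\ \Vert v\Vert=r_2\}$ inside $C$; the rest is a direct transcription of the argument in Theorem \ref{thm:retr2}. Since the authors explicitly point out that this proof proceeds ``similarly, using the method presented in the proof of Theorem \ref{thm:retr2} and exploiting Theorem \ref{thm:abstr1'},'' no new technical obstacle is expected.
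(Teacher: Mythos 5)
Your proposal is correct and follows essentially the same route as the paper: define $\bar N=\rho\circ N$, check that \eqref{6-stronger'} gives \eqref{6'} for $\bar N$, apply Theorem \ref{thm:abstr1'} to obtain a fixed point in $C\setminus V$, and use \eqref{eq:lambda2-bis} to rule out the rescaled alternative exactly as in the proof of Theorem \ref{thm:retr2}. The paper's proof is just a terser version of the same argument, deferring the verification details to the proofs of Theorems \ref{thm:retr2} and \ref{thm:abstr1'}.
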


\begin{proof}
Let the mapping $\bar{N}$ be defined as in the proof of Theorem \ref%
{thm:retr2}. The assumption (\ref{6-stronger'}) guarantees that $\bar{N},$
having the property $\bar{N}(C)\subset C,$ also satisfies condition (\ref{6'}%
). By Theorem \ref{thm:abstr1'}, $\bar{N}$ has a fixed point $\left(
u,v\right) $ in $C\setminus V.$ It was shown in the proof of Theorem \ref%
{thm:retr2} that (\ref{eq:lambda2-bis}) implies $N(u,v)=(u,v).$
\end{proof}

\subsection{Multiplicity results}

Recall that the seminorms $\Vert \cdot \Vert _{i}$ are continuous in $K_{i}$ with
respect to the topology induced by $|\cdot|_{i},$ which implies that there exist
constants $c_{i}>0$ such that $\Vert u\Vert _{i}\leq c_{i}|u|_{i}$ for all $%
u\in K_{i}.$

\begin{thm}
\label{thm:abstr3sols} Assume that there exist numbers $\rho _{i}, r_{i},
R_{i}$ with
\begin{equation}
0<c_{i}\rho _{i}<r_{i}<\Vert \phi _{i}\Vert \Vert \chi _{i}\Vert R_{i}\ \ \
(i=1,2),
\end{equation}%
such that
\begin{equation}
\inf\limits_{\substack{ (u,v)\in C  \\ \Vert u\Vert =r_{1},\text{\ }%
\left\Vert v\right\Vert \geq r_{2}}}\Vert N_{1}(u,v)\Vert > \frac{r_{1}}{%
\Vert \chi _{1}\Vert },\ \ \ \inf\limits_{\substack{ (u,v)\in C  \\ %
\left\Vert u\right\Vert \geq r_{1},\ \Vert v\Vert =r_{2}}}\Vert
N_{2}(u,v)\Vert >\frac{r_{2}}{\Vert \chi _{2}\Vert },  \label{6'>}
\end{equation}%
\begin{equation}
\sup\limits_{(u,v)\in C}|N_{i}(u,v)|\leq R_{i}\ \ \ \ (i=1,2),  \label{4tris}
\end{equation}%
and
\begin{equation}
N(u,v)\neq \lambda (u,v),\ \text{ for }\lambda \geq 1\text{ and }(\left\vert
u\right\vert =\rho _{1},\ \left\vert v\right\vert \leq \rho _{2}\text{ or }%
\left\vert u\right\vert \leq \rho _{1},\ \left\vert v\right\vert =\rho _{2}).
\label{eq:deg1-rho}
\end{equation}%
Then $N$ has at least three fixed points $(u_{i},v_{i})\in C$ $\left(
i=1,2,3\right) $ with%
\begin{eqnarray*}
\left\vert u_{1}\right\vert &<&\rho _{1},\ \ \left\vert v_{1}\right\vert
<\rho _{2}\ \ \left( \text{possibly zero solution}\right); \\
\left\Vert u_{2}\right\Vert &<&r_{1}\ \text{or\ }\left\Vert v_{2}\right\Vert
<r_{2};\ \left\vert u_{2}\right\vert >\rho _{1}\ \text{or }\left\vert
v_{2}\right\vert >\rho _{2}\ \ \left( \text{possibly one solution component
zero}\right) ; \\
\left\Vert u_{3}\right\Vert &>&r_{1},\ \left\Vert v_{3}\right\Vert >r_{2}\ \
\left( \text{both solution components nonzero}\right).
\end{eqnarray*}
\end{thm}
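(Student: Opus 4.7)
The plan is to apply additivity of the Granas fixed point index on three pairwise disjoint open subsets of $C$ and read off one fixed point from each. By \eqref{4tris} one has $N(C)\subset C$, so $\mathrm{ind}_{C}(N,C)=1$. Introduce
\[
W:=\bigl\{(u,v)\in C:|u|<\rho_{1},\ |v|<\rho_{2}\bigr\},\qquad V:=\bigl\{(u,v)\in C:\|u\|<r_{1}\ \text{or}\ \|v\|<r_{2}\bigr\}.
\]
On $\overline{W}$ one has $\|u\|\le c_{1}|u|\le c_{1}\rho_{1}<r_{1}$ and, symmetrically, $\|v\|<r_{2}$, so $\overline{W}\subset V$. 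Together with $C\setminus\overline{V}$, the sets $W$ and $V\setminus\overline{W}$ supply the three disjoint regions.

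For $\mathrm{ind}_{C}(N,V)$ I replay the homotopy argument of Theorem~\ref{thm:abstr1'} verbatim, with $h=(R_{1}\phi_{1},R_{2}\phi_{2})$ and $H(u,v,t)=th+(1-t)N(u,v)$. The only new ingredient is that the strict inequality in \eqref{6'>} additionally excludes fixed points of $N$ on $\partial V=B_{1}\cup B_{2}$: if $u=N_{1}(u,v)$ on $B_{1}$ then $u\ge\|N_{1}(u,v)\|\chi_{1}$ would force $r_{1}=\|u\|\ge\|N_{1}(u,v)\|\,\|\chi_{1}\|>r_{1}$, a contradiction (and symmetrically on $B_{2}$). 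This yields $\mathrm{ind}_{C}(N,V)=0$.

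For $\mathrm{ind}_{C}(N,W)$ I use the linear homotopy $H(u,v,t)=tN(u,v)$, which maps $\overline{W}\times[0,1]$ into $C$ by \eqref{4tris} and convexity of $C$. A putative fixed point $(u,v)=tN(u,v)$ with $(u,v)\in\partial W$ either forces $(u,v)=0\notin\partial W$ (at $t=0$) or yields $N(u,v)=\lambda(u,v)$ with $\lambda=1/t\ge1$ at a point satisfying $|u|=\rho_{1},|v|\le\rho_{2}$ or $|u|\le\rho_{1},|v|=\rho_{2}$, directly contradicting \eqref{eq:deg1-rho}. Homotopy invariance and normalization at the constant map $0\in W$ then give $\mathrm{ind}_{C}(N,W)=1$.

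The absence of fixed points of $N$ on $\partial W$ and on $\partial V$ established above legitimizes the additivity identities
\[
\mathrm{ind}_{C}(N,V\setminus\overline{W})=-1,\qquad \mathrm{ind}_{C}(N,C\setminus\overline{V})=1.
\]
Three nonzero indices in three pairwise disjoint open subsets of $C$ deliver three distinct fixed points $(u_{i},v_{i})$ lying in $W$, $V\setminus\overline{W}$, and $C\setminus\overline{V}$ respectively, whose membership conditions unpack precisely to the three claimed sets of inequalities (noting that $(u,v)\in C\setminus\overline{V}$ is exactly $\|u\|>r_{1}$ and $\|v\|>r_{2}$, and $(u,v)\notin\overline{W}$ is $|u|>\rho_{1}$ or $|v|>\rho_{2}$). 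The main technical point is keeping this boundary bookkeeping airtight, which is why \eqref{6'>} is imposed with strict inequality and why \eqref{eq:deg1-rho} is stated on the closed range $\lambda\ge1$ so as to cover the endpoint $\lambda=1$ corresponding to honest fixed points on $\partial W$.
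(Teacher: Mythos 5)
Your proposal is correct and follows essentially the same route as the paper: the same decomposition into $W$, $V\setminus\overline{W}$ and $C\setminus\overline{V}$, the index value $\mathrm{ind}_{C}(N,V)=0$ obtained by replaying the homotopy of Theorem~\ref{thm:abstr1'} (with the strict inequality in \eqref{6'>} ruling out fixed points on $\partial V$), and $\mathrm{ind}_{C}(N,W)=1$ via the linear homotopy to the zero map justified by \eqref{eq:deg1-rho}. You merely spell out some boundary details that the paper leaves implicit.
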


\begin{proof}
Let $V$ be as in the proof of Theorem \ref{thm:abstr1'}. Strict inequalities
in (\ref{6'>}) guarantee that $N$ is fixed point free on $\partial V.$
According to the proof of Theorem \ref{thm:abstr1'} we have $\mathrm{ind}%
_{C}(N,C)=1,$ $\mathrm{ind}_{C}(N,V)=0$ and therefore by the additivity
property, $\mathrm{ind}_{C}(N,C\setminus \overline{V})=1.$ Let%
\begin{equation*}
W:=\left\{ \left( u,v\right) \in C:\left\vert u\right\vert <\rho _{1},\
\left\vert v\right\vert <\rho _{2}\right\} .\text{ }
\end{equation*}%
For every $\left( u,v\right) \in \overline{W},$ we have%
\begin{equation*}
\left\Vert u\right\Vert \leq c_{1}\left\vert u\right\vert \leq c_{1}\rho
_{1}<c_{1}\left( \frac{1}{c_{1}}r_{1}\right) =r_{1}
\end{equation*}%
and, similarly, $\left\Vert v\right\Vert <r_{2}.$ Hence $\left( u,v\right)
\in V,$ which proves that $\overline{W}\subset V.$ Condition (\ref%
{eq:deg1-rho}) shows that $N$ is homotopic with zero on $W.$ Thus $\mathrm{%
ind}_{C}(N,W)=\mathrm{ind}_{C}(0,W)=1.$ Then $\mathrm{ind}_{C}(N,V\setminus
\overline{W})=0-1=-1.$ Consequently, there exist at least three fixed points
of $N,$ in $W,\ V\setminus \overline{W}$ and $C\setminus \overline{V}.$
\end{proof}

If we assume the following estimates of the $\Vert N_{i}(u,v)\Vert :$

\begin{equation}
\inf\limits_{\substack{ (u,v)\in C  \\ \Vert u\Vert =r_{1}}}\Vert
N_{1}(u,v)\Vert >\frac{r_{1}}{\Vert \chi _{1}\Vert },\ \ \inf\limits
_{\substack{ (u,v)\in C  \\ \Vert v\Vert =r_{2}}}\Vert N_{2}(u,v)\Vert >%
\frac{r_{2}}{\Vert \chi _{2}\Vert },  \label{6''}
\end{equation}%
then we can obtain a more precise localization for the solution $\left(
u_{2},v_{2}\right) $ in Theorem \ref{thm:abstr3sols}, the Figure~\ref%
{fig:3sols} illustrates this fact.

\begin{figure}[h]
\centering
\includegraphics[width=\textwidth]{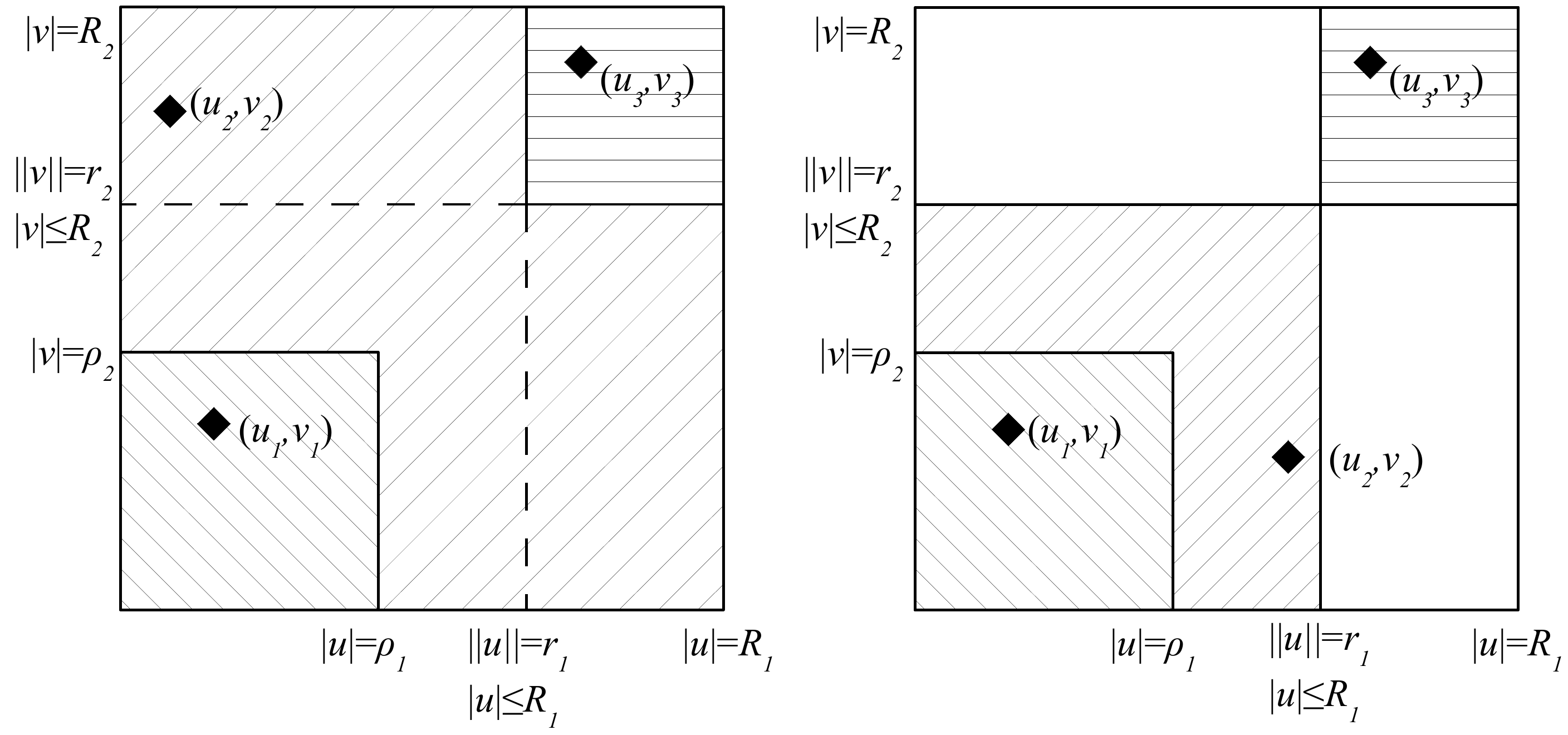}
\caption{Localization of the three solutions $(u_{i},v_{i})$ from Theorem
\protect\ref{thm:abstr3sols} (on the left) and \protect\ref{thm:abstr3sols'}
(on the right).}
\label{fig:3sols}
\end{figure}

\begin{thm}
\label{thm:abstr3sols'} Suppose that all the assumptions of Theorem \emph{%
\ref{thm:abstr3sols}} are satisfied with the condition \emph{(\ref{6'>})}
replaced by \emph{(\ref{6''})}. Then $N$ has at least three fixed points $%
(u_{i},v_{i})\in C$ $\left( i=1,2,3\right) $ with%
\begin{eqnarray*}
\left\vert u_{1}\right\vert &<&\rho _{1},\ \ \left\vert v_{1}\right\vert
<\rho _{2}\ \ \left( \text{possibly zero solution}\right); \\
\left\Vert u_{2}\right\Vert &<&r_{1},\ \Vert v_{2}\Vert <r_{2};\ \left\vert
u_{2}\right\vert >\rho _{1}\text{ or }\left\vert v_{2}\right\vert >\rho
_{2}\ \ \left( \text{possibly one solution component zero}\right) ; \\
\left\Vert u_{3}\right\Vert &>&r_{1},\ \left\Vert v_{3}\right\Vert >r_{2}\ \
\left( \text{both solution components nonzero}\right).
\end{eqnarray*}
\end{thm}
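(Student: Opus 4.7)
The plan is to imitate the proof of Theorem \ref{thm:abstr3sols} but to exploit that condition (\ref{6''}) is strictly stronger than both condition (\ref{6'>}) of Theorem \ref{thm:abstr3sols} and condition (\ref{6}) of Theorem \ref{thm:abstr1}: the infima in (\ref{6''}) are taken over the larger sets obtained by dropping the constraint on the other component. This lets me carry out the index calculation on the smaller open set $U$ of Theorem \ref{thm:abstr1}, in addition to the set $V$ used for Theorem \ref{thm:abstr3sols}.

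First I note that (\ref{4tris}) gives $N(C)\subset C$, hence $\mathrm{ind}_C(N,C)=1$ by Proposition \ref{fpi-pro}. I then introduce both open sets
\begin{equation*}
U=\{(u,v)\in C:\|u\|<r_1,\ \|v\|<r_2\},\qquad V=\{(u,v)\in C:\|u\|<r_1\ \text{or}\ \|v\|<r_2\},
\end{equation*}
together with the neighbourhood $W=\{(u,v)\in C:|u|<\rho_1,\ |v|<\rho_2\}$ of the origin. Since (\ref{6''}) implies (\ref{6}), the homotopy argument from the proof of Theorem \ref{thm:abstr1} applies verbatim to yield $\mathrm{ind}_C(N,U)=0$; the strict inequalities in (\ref{6''}) together with $\|\chi_i\|\leq 1$ also exclude fixed points on $\partial U$. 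In parallel, since (\ref{6''}) implies (\ref{6'>}), the proof of Theorem \ref{thm:abstr1'} yields $\mathrm{ind}_C(N,V)=0$ with no fixed points on $\partial V$. The Leray--Schauder type condition (\ref{eq:deg1-rho}) then gives $\mathrm{ind}_C(N,W)=1$ by the straight-line homotopy to the constant map with value $0\in W$.

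The geometric fact I need next is the sharper inclusion $\overline W\subset U$ (instead of merely $\overline W\subset V$ as in Theorem \ref{thm:abstr3sols}). For $(u,v)\in\overline W$ the seminorm continuity yields $\|u\|\leq c_1|u|\leq c_1\rho_1<r_1$, and analogously $\|v\|<r_2$, so $(u,v)\in U$. Combined with $U\subset V$, additivity of the index produces
\begin{equation*}
\mathrm{ind}_C(N,U\setminus\overline W)=0-1=-1,\qquad \mathrm{ind}_C(N,C\setminus\overline V)=1-0=1,
\end{equation*}
which, together with $\mathrm{ind}_C(N,W)=1$, yields fixed points in each of the three disjoint open sets $W$, $U\setminus\overline W$ and $C\setminus\overline V$. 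Reading off the localizations: $(u_1,v_1)\in W$ gives $|u_1|<\rho_1,|v_1|<\rho_2$; $(u_2,v_2)\in U\setminus\overline W$ satisfies $\|u_2\|<r_1$ and $\|v_2\|<r_2$ together with $|u_2|>\rho_1$ or $|v_2|>\rho_2$; and $(u_3,v_3)\in C\setminus\overline V$ lies in the ``upper right corner'' $\{(u,v)\in C:\|u\|>r_1,\|v\|>r_2\}$, since $\overline V\cap C=\{\|u\|\leq r_1\ \text{or}\ \|v\|\leq r_2\}$.

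The main technical point that requires care is checking that the strict inequalities in (\ref{6''}), rather than just the non-strict ones in (\ref{6}) and (\ref{6'>}), do rule out fixed points on $\partial U$ and $\partial V$ in order to apply additivity; this is the standard mechanism already present in the proof of Theorem \ref{thm:abstr3sols} and exploits $r_i/\|\chi_i\|\geq r_i$. Once this is in place, the three index values $1,-1,1$ on $W$, $U\setminus\overline W$, $C\setminus\overline V$ immediately give the three fixed points with the claimed refined localization, which is exactly the improvement over Theorem \ref{thm:abstr3sols} depicted in Figure \ref{fig:3sols}.
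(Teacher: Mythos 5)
Your proof is correct and follows essentially the same route as the paper: establish $\mathrm{ind}_C(N,C)=1$, use the strict inequalities in (\ref{6''}) (which imply both (\ref{6}) and (\ref{6'})) to get well-defined zero indices on $U$ and $V$, use (\ref{eq:deg1-rho}) to get $\mathrm{ind}_C(N,W)=1$ with $\overline{W}\subset U$, and conclude by additivity that the indices on $W$, $U\setminus\overline{W}$, $C\setminus\overline{V}$ are $1,-1,1$. The only difference is that you spell out the inclusion and boundary checks that the paper leaves as "an analysis similar to that in the proof of Theorem \ref{thm:abstr3sols}."
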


\begin{proof}
The assumption (\ref{6''}) implies both (\ref{6}) and (\ref{6'}) and that
there are no fixed points of $N$ on $\partial U$ and $\partial V.$ Hence, as
in the proofs of Theorems \ref{thm:abstr1} and \ref{thm:abstr1'}, the
indices $\mathrm{ind}_{C}(N,U)$ and $\mathrm{ind}_{C}(N,V)$ are well defined
and equal $0.$ An analysis similar to that in the proof of Theorem \ref%
{thm:abstr3sols} shows that
\begin{equation*}
\mathrm{ind}_{C}(N,W)=1,\ \mathrm{ind}_{C}(N,U\setminus \overline{W})=-1,\
\mathrm{ind}_{C}(N,C\setminus \overline{V})=1,
\end{equation*}%
which completes the proof.
\end{proof}

In order to ensure that the solution $(u_{1},v_{1})$ from the theorems above
is nonzero, and thereby to obtain three \emph{nonzero} solutions, we use
some additional assumptions on $N.$

\begin{thm}
\label{thm:abstr3sols''}Assume that all the conditions of Theorem \emph{\ref%
{thm:abstr3sols}} or Theorem \emph{\ref{thm:abstr3sols'}} are satisfied.
Consider $0<\varrho _{i}<\Vert \phi _{i}\Vert \Vert \chi _{i}\Vert \rho _{i},
$ $i=1,2.$

\emph{(i)} If
\begin{equation}
\inf\limits_{\substack{ (u,v)\in K,|u|\leq \rho _{1},|v|\leq \rho _{2} \\ %
\Vert u\Vert =\varrho _{1},\Vert v\Vert \geq \varrho _{2}}}\Vert
N_{1}(u,v)\Vert \geq \frac{\varrho _{1}}{\Vert \chi _{1}\Vert },\ \
\inf\limits_{\substack{ (u,v)\in K,|u|\leq \rho _{1},|v|\leq \rho _{2} \\ %
\Vert u\Vert \geq \varrho _{1},\Vert v\Vert =\varrho _{2}}}\Vert
N_{2}(u,v)\Vert \geq \frac{\varrho _{2}}{\Vert \chi _{2}\Vert }
\label{6varrho}
\end{equation}%
and
\begin{equation*}
|N_{i}(u,v)|\leq \rho _{i}\ \ \text{for\ \ }|u|\leq \rho _{1},\ |v|\leq \rho
_{2}\ \ \ \left( i=1,2\right) ,
\end{equation*}%
then we can assume that the solution $(u_{1},v_{1})$ from Theorem \emph{\ref%
{thm:abstr3sols}} or \emph{\ref{thm:abstr3sols'}} satisfies $\Vert
u_{1}\Vert \geq \varrho _{1}$ and $\Vert v_{1}\Vert \geq \varrho _{2};$

\emph{(ii)} if
\begin{equation}
\inf\limits_{\substack{ (u,v)\in K,|u|\leq \tilde{\rho}_{1},|v|\leq \tilde{%
\rho}_{2}  \\ \Vert u\Vert \leq \varrho _{1},\Vert v\Vert \leq \varrho _{2}}}%
\Vert N_{1}(u,v)\Vert \geq \varrho _{1}\ {\text{ or }}\ \inf\limits
_{\substack{ (u,v)\in K,|u|\leq \tilde{\rho}_{1},|v|\leq \tilde{\rho}_{2}
\\ \Vert u\Vert \leq \varrho _{1},\Vert v\Vert \leq \varrho _{2}}}\Vert
N_{2}(u,v)\Vert \geq \varrho _{2}  \label{6 or varrho}
\end{equation}%
for some$\ \tilde{\rho}_{1}\leq \rho _{1},\ \tilde{\rho}_{2}\leq \rho _{2},$
then we can assume that the solution $(u_{1},v_{1})$ from Theorem \emph{\ref%
{thm:abstr3sols}} or \emph{\ref{thm:abstr3sols'}} satisfies $\Vert
u_{1}\Vert \geq \varrho _{1}$ or $\Vert v_{1}\Vert \geq \varrho _{2}$ or $%
|u_{1}|>\tilde{\rho}_{1}$ or $|v_{1}|>\tilde{\rho}_{2}.$
\end{thm}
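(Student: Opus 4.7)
The plan is to handle the two parts separately. Both parts only refine the localization of the first solution $(u_{1},v_{1})$ from Theorem \ref{thm:abstr3sols} (or Theorem \ref{thm:abstr3sols'}); the other two solutions $(u_{2},v_{2})$ and $(u_{3},v_{3})$ are obtained exactly as in the proofs of those theorems.

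For part (i), I would reapply Theorem \ref{thm:abstr1'} on the smaller closed convex set
\[
C_{0}:=\{(u,v)\in K\,:\,|u|\leq\rho_{1},\ |v|\leq\rho_{2}\}.
\]
The assumption $|N_{i}(u,v)|\leq\rho_{i}$ on $C_{0}$ gives $N(C_{0})\subset C_{0}$, matching hypothesis \eqref{4bis}; the inequality $\varrho_{i}<\|\phi_{i}\|\|\chi_{i}\|\rho_{i}$ matches \eqref{5}; and \eqref{6varrho} is the exact analog of \eqref{6'} with the pair $(r_{i},R_{i})$ replaced by $(\varrho_{i},\rho_{i})$. Theorem \ref{thm:abstr1'} then delivers a fixed point $(u_{1},v_{1})\in C_{0}$ with $\|u_{1}\|\geq\varrho_{1}$ and $\|v_{1}\|\geq\varrho_{2}$. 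To upgrade $|u_{1}|\leq\rho_{1}$ and $|v_{1}|\leq\rho_{2}$ to \emph{strict} inequalities (so that $(u_{1},v_{1})\in W$ in the notation of the proof of Theorem \ref{thm:abstr3sols}), I would invoke \eqref{eq:deg1-rho} at $\lambda=1$: it forbids $N$ from fixing any point with $|u|=\rho_{1},\,|v|\leq\rho_{2}$ or $|u|\leq\rho_{1},\,|v|=\rho_{2}$. The new $(u_{1},v_{1})$ is automatically distinct from $(u_{2},v_{2})$ and $(u_{3},v_{3})$, since the seminorm-continuity constant $c_{1}$ yields $\|u_{1}\|\leq c_{1}|u_{1}|\leq c_{1}\rho_{1}<r_{1}$ and analogously for $v_{1}$.

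Part (ii) is a one-line contradiction requiring no new index computation. Let $(u_{1},v_{1})$ be the fixed point of $N$ in $W$ supplied by Theorem \ref{thm:abstr3sols} or \ref{thm:abstr3sols'}. Suppose that all four alternatives in the conclusion fail simultaneously, i.e.\ $|u_{1}|\leq\tilde{\rho}_{1}$, $|v_{1}|\leq\tilde{\rho}_{2}$, $\|u_{1}\|<\varrho_{1}$, $\|v_{1}\|<\varrho_{2}$. Then $(u_{1},v_{1})$ lies in the set over which the infimum in \eqref{6 or varrho} is taken, and since $N(u_{1},v_{1})=(u_{1},v_{1})$, hypothesis \eqref{6 or varrho} forces $\|u_{1}\|\geq\varrho_{1}$ or $\|v_{1}\|\geq\varrho_{2}$, contradicting the assumption.

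The step I expect to require the most care is the application of Theorem \ref{thm:abstr1'} to the restriction of $N$ to $C_{0}$ in part (i): one has to verify that the full abstract framework (cones $K_{i}$, Harnack vectors $\chi_{i}$, test elements $\phi_{i}$, monotonicity of norms and seminorms) is preserved under the passage from $C$ to $C_{0}$, and in particular that the element $h=(\rho_{1}\phi_{1},\rho_{2}\phi_{2})$ used in the homotopy inside the proof of Theorem \ref{thm:abstr1'} indeed lies in $C_{0}$. Since $|\rho_{i}\phi_{i}|=\rho_{i}$, this is immediate, and all subsequent verifications are routine.
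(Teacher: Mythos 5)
Your proposal is correct and follows essentially the same route as the paper: part (i) is Theorem \ref{thm:abstr1'} applied with $r_{i}:=\varrho_{i}$, $R_{i}:=\rho_{i}$, and part (ii) is precisely the ``no fixed points in $A$'' argument of Theorem \ref{thm:abstr1or A} with $A=\left\{(u,v):\|u\|<\varrho_{1},\ \|v\|<\varrho_{2},\ |u|\leq\tilde{\rho}_{1},\ |v|\leq\tilde{\rho}_{2}\right\}$, which you simply inline as a direct contradiction. Your additional remarks --- invoking \eqref{eq:deg1-rho} at $\lambda=1$ to place the new fixed point strictly inside $W$, and observing that part (ii) requires no invariance condition on $C_{0}$ --- supply details the paper leaves implicit.
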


\begin{proof}
(i) The inequality follows from Theorem \ref{thm:abstr1'} applied in the
case of $r_{i}:=\varrho _{i}$ and $R_{i}:=\rho _{i}.$

(ii) From Theorem \ref{thm:abstr1or A} applied in the case of $%
r_{i}:=\varrho _{i},$ $R_{i}:=\rho _{i}$ and
\begin{equation*}
A=%
\mbox{$\left\{(u,v):\;\|u\| < \varrho_1, \|v\| < \varrho_2, |u|\leq
\tilde\rho_1, |v|\leq \tilde\rho_2\right\}$},
\end{equation*}%
we obtain that there are no fixed points in $N$ in $A,$ which ends the proof.
\end{proof}

The next Remark illustrates how Theorem \ref{thm:abstr1'} can be used to
prove the existence of $n$ nontrivial solutions.

\begin{rem}
If $N$ satisfies the conditions of Theorem \ref{thm:abstr1'} for all pairs
of radii
\begin{equation*}
r_{i}^{j}\leq \Vert \phi _{i}\Vert \Vert \chi _{i}\Vert R_{i}^{j}\ \text{
for }\ i=1,2,\ j=1,2,\ldots ,n,
\end{equation*}%
satisfying
\begin{equation*}
c_{i}R_{i}^{j}<r_{i}^{j+1}\ \text{ for }\ i=1,2,\ j=1,2,\ldots ,n-1,
\end{equation*}%
then $N$ possesses at least $n$ nontrivial solutions $(u_{j},v_{j})$ with
\begin{equation*}
|u_{j}|\leq R_{1}^{j},\ |v_{j}|\leq R_{2}^{j},\ \Vert u_{j}\Vert \geq
r_{1}^{j},\ \Vert v_{j}\Vert \geq r_{2}^{j}.
\end{equation*}%
Moreover, if the conditions
\begin{equation*}
\inf\limits_{\substack{ (u,v)\in K,|u|\leq r_{1}^{j},|v|\leq r_{2}^{j}  \\ %
\Vert u\Vert =r_{1}^{j},\text{\ }\left\Vert v\right\Vert \geq r_{2}^{j}}}%
\Vert N_{1}(u,v)\Vert >\frac{r_{1}^{j}}{\Vert \chi _{1}\Vert },\ \ \
\inf\limits_{\substack{ (u,v)\in K,|u|\leq r_{1}^{j},|v|\leq r_{2}^{j}  \\ %
\left\Vert u\right\Vert \geq r_{1}^{j},\ \Vert v\Vert =r_{2}^{j}}}\Vert
N_{2}(u,v)\Vert >\frac{r_{2}^{j}}{\Vert \chi _{2}\Vert },
\end{equation*}%
\begin{equation*}
\sup\limits_{(u,v)\in K,|u|\leq R_{1}^{j},|v|\leq
R_{2}^{j}}|N_{i}(u,v)|<R_{i}^{j}\ \ \ \ (i=1,2),
\end{equation*}%
hold, then we have $n-1$ additional solutions $(\bar{u}_{j},\bar{v}_{j}),$ $%
j=1,\ldots ,n-1$ such that
\begin{equation*}
|\bar{u}_{j}|<R_{1}^{j+1},\ |\bar{v}_{j}|<R_{2}^{j+1};\ |\bar{u}%
_{j}|>R_{1}^{j}\text{ or }|\bar{v}_{j}|>R_{2}^{j};\ \Vert \bar{u}_{j}\Vert
<r_{1}^{j+1}\text{ or }\Vert \bar{v}_{j}\Vert <r_{2}^{j+1}.
\end{equation*}%
The first conclusion follows from Theorem \ref{thm:abstr1'} applied $n$
times, whereas the second follows from Theorem \ref{thm:abstr3sols} applied $%
n-1$ times. 
\end{rem}

\begin{rem}
\label{rem:betterconstant} Assume that between the cones $K_{i}\subseteq
K_{i}^{0}$ and the norms $\left\vert \cdot\right\vert _{i}$ $\left( i=1,2\right)
$ there is a strong compatibility, expressed by the following condition:%
\begin{equation}
\text{there exists }h_{i}^{0}\in K_{i},\,\left\vert h_{i}^{0}\right\vert =1,%
\text{ such that }h_{i}^{0}\geq u\text{ for every }u\in K_{i}\text{ with }%
\left\vert u\right\vert \leq 1.  \label{br}
\end{equation}%
Then the localization theory for fixed points of a given operator $N=\left(
N_{1},N_{2}\right) :K\rightarrow K$ can be developed without assuming the
Harnack type inequality $``u\geq \left\Vert u\right\Vert \chi _{i}"$ for the
definition of $K$ and without specifying the elements $\chi _{i},\phi _{i}.$
This is based on a different way of proving that the homotopy $%
H(u,v,t)=th+(1-t)N\left( u,v\right) $ is fixed point free on the boundary.
Here we choose $\ h=\left( R_{1}h_{1}^{0},R_{2}h_{2}^{0}\right) .$ For
instance, the corresponding version of Theorem \ref{thm:abstr1} is the
following:
\end{rem}

\begin{thm}
Assume that condition \emph{\eqref{br}} holds, that $N:K\rightarrow K$ is
completely continuous and that for some numbers $0\leq r_{i}<\left\Vert
h_{i}^{0}\right\Vert R_{i},$ the conditions
\begin{equation}
\inf\limits_{\substack{ (u,v)\in C  \\ \Vert u\Vert =r_{1},\Vert v\Vert \leq
r_{2}}}\Vert N_{1}(u,v)\Vert >r_{1},\ \ \inf\limits_{\substack{ (u,v)\in C
\\ \Vert u\Vert \leq r_{1},\Vert v\Vert =r_{2}}}\Vert N_{2}(u,v)\Vert >r_{2}
\label{br1}
\end{equation}%
and%
\begin{equation*}
\sup\limits_{(u,v)\in C}|N_{i}(u,v)|\leq R_{i}\ \ \ \ (i=1,2)
\end{equation*}%
hold. Then $N$ has at least one fixed point $(u,v)\in K$ such that $|u|\leq
R_{1},$ $|v|\leq R_{2}$ and either $\Vert u\Vert \geq r_{1}$ or $\Vert
v\Vert \geq r_{2}.$
\end{thm}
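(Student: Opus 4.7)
The plan is to mimic the proof of Theorem~\ref{thm:abstr1}, using the homotopy endpoint $h:=(R_{1}h_{1}^{0},R_{2}h_{2}^{0})$ in place of $(R_{1}\phi _{1},R_{2}\phi _{2})$, and letting condition~\eqref{br} play the role previously played by the Harnack inequality $u\ge \|u\|\chi _{i}$. The sup-condition $|N_{i}|\le R_{i}$ forces $N(C)\subseteq C$, so $\mathrm{ind}_{C}(N,C)=1$ by normalization. As in Theorem~\ref{thm:abstr1} I would set $U:=\{(u,v)\in C:\|u\|<r_{1},\,\|v\|<r_{2}\}$ with boundary $\partial U=A_{1}\cup A_{2}$. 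If $N$ already has a fixed point on $\partial U$ the proof is complete; otherwise, establishing $\mathrm{ind}_{C}(N,U)=0$ together with additivity will deliver $\mathrm{ind}_{C}(N,C\setminus \overline{U})=1$ and hence a fixed point in $C\setminus \overline{U}$, for which either $\|u\|\ge r_{1}$ or $\|v\|\ge r_{2}$.

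For the vanishing of the index I would use the linear homotopy $H(u,v,t):=th+(1-t)N(u,v)$, which maps $C\times [0,1]$ into $C$ by convexity of $C$ and $|h_{i}^{0}|=1$. Monotonicity of $\|\cdot\|$ together with $r_{i}<R_{i}\|h_{i}^{0}\|$ gives $\|R_{i}h_{i}^{0}\|>r_{i}$, so $h\notin \overline{U}$ and $\mathrm{ind}_{C}(h,U)=0$. I would then verify that $H$ is fixed-point-free on $\partial U$: the endpoint $t=1$ is already handled, and $t=0$ is immediate from the strict inequality in~\eqref{br1}. For $t\in (0,1)$ and a putative fixed point $(u,v)\in A_{1}$, projecting onto the first coordinate yields
\[
u=tR_{1}h_{1}^{0}+(1-t)N_{1}(u,v),
\]
so $u\ge tR_{1}h_{1}^{0}$ and $u\ge (1-t)N_{1}(u,v)$ in $K_{1}$, and monotonicity of the seminorm produces the pair of bounds $\|u\|\ge tR_{1}\|h_{1}^{0}\|$ and $\|u\|\ge (1-t)\|N_{1}(u,v)\|$. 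Combining these with the two strict inequalities $R_{1}\|h_{1}^{0}\|>r_{1}$ and $\|N_{1}(u,v)\|>r_{1}$ I would derive the contradiction $\|u\|>r_{1}$ against $\|u\|=r_{1}$; the case $(u,v)\in A_{2}$ is symmetric. Homotopy invariance then gives $\mathrm{ind}_{C}(N,U)=\mathrm{ind}_{C}(h,U)=0$, closing the argument.

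The main obstacle is precisely the combination of the two monotonicity bounds into the single strict inequality $\|u\|>r_{1}$. In Theorem~\ref{thm:abstr1} the Harnack inequality absorbed both summands into nonnegative multiples of $\chi _{1}$, reducing everything to a scalar computation; here \eqref{br} provides only the upper envelope $|u|h_{1}^{0}\ge u$, so the lower seminorm estimate must be assembled directly. The natural route is to invoke super-additivity of $\|\cdot\|$ on $K_{1}$ (which is the behaviour of the min-type seminorms that arise in the PDE applications driving the paper), yielding $\|u\|\ge tR_{1}\|h_{1}^{0}\|+(1-t)\|N_{1}(u,v)\|>tr_{1}+(1-t)r_{1}=r_{1}$ and the required contradiction. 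In the absence of super-additivity, a slightly finer argument separating the ranges $t\le r_{1}/(R_{1}\|h_{1}^{0}\|)$ and $t\ge 1-r_{1}/\|N_{1}(u,v)\|$ would be the fallback.
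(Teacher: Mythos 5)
Your overall architecture (homotopy to $h=(R_1h_1^0,R_2h_2^0)$, $\mathrm{ind}_C(N,C)=1$, $\mathrm{ind}_C(N,U)=0$, additivity) matches the paper's, and the endpoints $t=0,1$ are handled correctly. But the step you yourself flag as the main obstacle is a genuine gap, and neither of your proposed repairs closes it. Splitting $u=tR_1h_1^0+(1-t)N_1(u,v)$ into the two separate bounds $\|u\|\ge tR_1\|h_1^0\|$ and $\|u\|\ge(1-t)\|N_1(u,v)\|$ only yields $\|u\|>\max\{t,1-t\}\,r_1$, which at $t=1/2$ gives $\|u\|>r_1/2$ and no contradiction. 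Super-additivity of $\|\cdot\|$ on the cone is not among the hypotheses, and it is in fact \emph{false} in the application driving the paper: there the seminorm is $\|u\|_i=M_i\bigl(\int_{D_i}u^{s_i}\,dx\bigr)^{1/s_i}$ with $s_i>1$, an $L^{s_i}$-type norm, which is sub-additive rather than super-additive (the functional $\inf_{D_i}u$ would be super-additive, but that is the left-hand side of the Harnack inequality, not the seminorm). The range-splitting fallback also fails: you would need the sets $\{t: tR_1\|h_1^0\|>r_1\}$ and $\{t:(1-t)\|N_1(u,v)\|>r_1\}$ to cover $(0,1)$, i.e. $r_1/\|N_1(u,v)\|+r_1/(R_1\|h_1^0\|)\le 1$, and the hypotheses only give that each summand is $<1$.

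The missing idea is that condition \eqref{br} must enter the estimate itself, not merely supply a cone point $h$. Since $(u,v)\in C$ implies $|N_1(u,v)|\le R_1$, condition \eqref{br} gives $R_1h_1^0\ge N_1(u,v)$; hence
\begin{equation*}
u=tR_1h_1^0+(1-t)N_1(u,v)=N_1(u,v)+t\bigl(R_1h_1^0-N_1(u,v)\bigr)\ge N_1(u,v),
\end{equation*}
and monotonicity of the seminorm together with the strict inequality \eqref{br1} yields $r_1=\|u\|\ge\|N_1(u,v)\|>r_1$ in a single step, with no need to combine two partial bounds. This is exactly the paper's argument; your proposal never invokes the domination property of $h_i^0$ over the unit ball of $K_i$, which is the whole point of hypothesis \eqref{br}.
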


\begin{proof}
Following the proof of Theorem \ref{thm:abstr1} and assuming that there
exists $\left( u,v\right) \in \partial U$ and $t\in \left( 0,1\right) $ with
$H(u,v,t)=\left( u,v\right) ,$ in case that $\left( u,v\right) \in A_{1},$
we have
\begin{equation*}
u=N_{1}\left( u,v\right) +t(h-N_{1}\left( u,v\right) )\geq N_{1}\left(
u,v\right) .
\end{equation*}%
Therefore, making use of the first inequality from (\ref{br1}), we obtain
\begin{equation*}
r_{1}=\left\Vert u\right\Vert \geq \left\Vert N_{1}\left( u,v\right)
\right\Vert >r_{1},
\end{equation*}
a contradiction. The same argument can be used if $\left( u,v\right) \in
A_{2}.$ Note that $h\notin \overline{U}$ because of the condition $%
r_{i}<\left\Vert h_{i}^{0}\right\Vert R_{i}$ $\left( i=1,2\right) .$
\end{proof}

We underline that, although the Harnack inequality is not used in the above
proof, it is essential to obtain the estimates from below of the type (\ref%
{br1}) in the applications.

\begin{rem}
We stress that the abstract results obtained in this section can be
generalized to the case of systems of more than two equations. The idea is
to consider the product space $X=\Pi _{i=1}^{n}X_{i}$ of the Banach spaces $%
X_{i},$ endowed with the norms $|\cdot|_{i},$ seminorms $\Vert \cdot \Vert _{i},$ and
the cones $K_{i}\subset K_{i}^{0}\subset X_{i}$ such that $\phi _{i}\in
K_{i}:=\mbox{$\left\{u\in K^0_i:\;u\geq \|u\|_i\chi_i\right\}$}$ for fixed $%
\chi _{i}$ and $\phi _{i}$ satisfying (\ref{eq:phi_i}), $i=1,2,...,n.$ In
this setting we are interested in the existence and localization of fixed
points of a given operator $N\colon K\rightarrow K,$ where $K=\Pi
_{i=1}^{n}K_{i}.$ For example, let us consider the sets
\begin{equation*}
C=\mbox{$\left\{u\in K:\;|u_1|_1\leq R_1,\ldots,|u_n|_n\leq R_n\right\}$},\
U=\mbox{$\left\{u\in C:\;\|u_1\|_1 < r_1,\ldots,\|u_n\|_n < r_n\right\}$}
\end{equation*}%
for given radii $r_{i},R_{i}$ with $r_{i}<\Vert \phi _{i}\Vert _{i}\Vert
\chi _{i}\Vert _{i}R_{i},\ $ $i=1,...,n.$ If
\begin{equation*}
\sup_{u\in C}|N_{i}(u)|_{i}\leq R_{i},\ \ i=1,2,...,n
\end{equation*}%
and
\begin{equation*}
\inf_{\substack{ (u,v)\in \overline{U}  \\ \Vert u_{i}\Vert _{i}=r_{i}}}%
\Vert N_{i}(u)\Vert _{i}\geq \frac{r_{i}}{\Vert \chi _{i}\Vert _{i}},\ \ \
i=1,2,...,n
\end{equation*}%
then $N$ has at least one fixed point in $C\setminus U.$

As a consequence, results analogous to ones obtained later in Section \ref%
{sect:appl}, can be established for systems with more than two differential
equations.
\end{rem}

\subsection{Case of isotone operators}

Let us now turn our attention to the case when $N$ satisfies a monotonicity
condition with respect to the order induced by cones $K_{i}^{0}.$ Precisely,
assume that the operator $N$ is \emph{isotone}, that is
\begin{equation*}
0\leq u\leq u^{\prime },\ 0\leq v\leq v^{\prime }\ \text{ implies }\
N_{1}(u,v)\leq N_{1}(u^{\prime },v^{\prime }),\ N_{2}(u,v)\leq
N_{2}(u^{\prime },v^{\prime }),
\end{equation*}%
and that condition~\eqref{br} is satisfied.

Let us examine the condition (\ref{6}). If $(u,v)\in C,$ $\Vert u\Vert
=r_{1},$ $\Vert v\Vert \leq r_{2},$ then $u\geq \Vert u\Vert \chi
_{1}=r_{1}\chi _{1}$ and $\Vert N_{1}(u,v)\Vert \geq \Vert N_{1}(r_{1}\chi
_{1},0)\Vert .$ Therefore the condition (\ref{6}) is implied by the simpler
one:
\begin{equation*}
\Vert N_{1}(r_{1}\chi _{1},0)\Vert \geq \frac{r_{1}}{\Vert \chi _{1}\Vert }%
,\ \ \Vert N_{2}(0,r_{2}\chi _{2})\Vert \geq \frac{r_{2}}{\Vert \chi
_{2}\Vert }.  \tag{\ref{6}'}
\end{equation*}%
Similarly, the condition (\ref{6'}) is implied by the following one:
\begin{equation*}
\Vert N_{1}(r_{1}\chi _{1},r_{2}\chi _{2})\Vert \geq \frac{r_{1}}{\Vert \chi
_{1}\Vert },\ \ \Vert N_{2}(r_{1}\chi _{1},r_{2}\chi _{2})\Vert \geq \frac{%
r_{2}}{\Vert \chi _{2}\Vert }.  \tag{\ref{6'}'}
\end{equation*}

Let us now examine the condition (\ref{4}). If $\left\vert u\right\vert \leq
R_{1}$ and $\left\vert v\right\vert \leq R_{2}$ then $u\leq R_{1}h_{1}^{0},$
$v\leq R_{2}h_{2}^{0}$ and $\left\vert N_{i}(u,v)\right\vert \leq \left\vert
N_{i}(R_{1}h_{1}^{0},R_{2}h_{2}^{0})\right\vert ,$ $i=1,2.$ This shows that
the condition
\begin{equation*}
|N_{i}(R_{1}h_{1}^{0},R_{2}h_{2}^{0})|\leq R_{i}\ \ \ \left( i=1,2\right)
\end{equation*}%
implies (\ref{4}).

\section{Applications to $\left( p,q\right) $-Laplacian systems}

\label{sect:appl}

\subsection{Existence results}

We now turn our attention back to the existence and localization of
nonnegative weak solutions of the $\left( p,q\right) $-Laplace system %
\eqref{eq:diff}. By a \textit{nonnegative} solution we mean a solution $%
\left( u,v\right) $ with $u\geq 0$ and $v\geq 0.$ A nonnegative solution $%
\left( u,v\right) $ is said to be \textit{nontrivial} if either $u\neq 0$ or
$v\neq 0,$ and is called \textit{positive} if both functions $u,v $ are
different from zero, equivalently, if $u>0$ and $v>0.$

In order to apply the abstract theorems from the previous section, we choose
$X_{1}=X_{2}=L^{\infty }(\Omega ),$ and $K_{1}^{0}=K_{2}^{0}=K^{0}:=\left\{
u\in L^{\infty }\left( \Omega \right) :u\geq 0\right\} .$ Thus the two norms
$\left\vert \cdot\right\vert _{1},\left\vert \cdot\right\vert _{2}$ coincide with
the usual norm of $L^{\infty }\left( \Omega \right) ,$ simply denoted by $%
\left\vert \cdot\right\vert .$ Let $S_{r}:L^{\infty }\left( \Omega \right)
\rightarrow C_{0}^{1}(\overline{\Omega })$ be the operator which assigns to
any $v\in L^{\infty }\left( \Omega \right) $ the unique weak solution $u$ of
the problem%
\begin{equation}
\begin{cases}
-\Delta _{r}u=v & \text{in }\Omega , \\
u=0 & \text{on }\partial \Omega ,%
\end{cases}
\label{eq:defin S}
\end{equation}%
i.e. $S_{r}=(-\Delta _{r})^{-1}.$ It is known (see \cite{b:AC}) that $S_{r}$
is well defined, completely continuous, isotone and positive. Also consider
the superposition operators $F,G\colon K^{0}\times K^{0}\rightarrow K^{0},$
\begin{equation*}
F(u,v)(x)=f(x,u(x),v(x)),\ \ G(u,v)(x)=g(x,u(x),v(x)),
\end{equation*}%
and define the operator $N=(N_{1},N_{2})$ by
\begin{equation*}
N_{1}(u,v)=S_{p}(F(u,v)),\ \ N_{2}(u,v)=S_{q}(G(u,v)).
\end{equation*}%
Note that a pair $(u,v)$ is a nonnegative weak solution of \eqref{eq:diff}
if and only if it is a fixed point of $N$ in $K^{0}\times K^{0}.$ In Remark~%
\ref{solutions-regular} we shall give some additional information on the
regularity of the solution.

We recall a local weak Harnack inequality for nonnegative $p$-superharmonic
functions due to N.S. Trudinger, see \cite[Theorem 1.2]{b:Trudinger}:

\begin{thm}
\label{lem:Trudinger} For each $s\in \lbrack 1,\frac{n\left( p-1\right) }{n-p%
})$ if $p<n$ or $s\in \lbrack 1,\infty )$ if $p\geq n$ and for each $\rho >0$
such that $B_{3\rho }\subset \Omega ,$ there exists a constant $C=C\left(
n,p,s\right) >0$ such that
\begin{equation*}
\inf_{B_{\rho }}u\geq C\rho ^{-\frac{n}{s}}\left( \int_{B_{2\rho
}}u^{s}dx\right) ^{\frac{1}{s}}
\end{equation*}%
for every nonnegative $p$-superharmonic function $u$ in $\Omega .$
\end{thm}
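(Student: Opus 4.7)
The plan is to follow the classical De Giorgi--Nash--Moser scheme adapted to the quasilinear setting by Trudinger; the result is extracted rather than proved in the paper, but the natural strategy has three ingredients: (a) Caccioppoli-type energy estimates for negative powers of $u$, (b) a Moser iteration which upgrades an $L^{-t}$ bound on a larger ball into an $L^{-\infty}$ bound (i.e.\ an $\inf u$ bound) on a smaller ball, and (c) a crossover lemma which translates the $L^{-t}$ control to $L^{s}$ control for some $s>0$. For notational simplicity I would rescale so that $\rho=1$ and $B_{3}\subset\Omega$, and recover the stated dependence on $\rho$ by homogeneity at the end.

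First I would test the weak inequality $-\Delta_{p}u\geq 0$ against the function $\eta^{p}u^{\gamma}$, where $\eta\in C_{c}^{\infty}(B_{2})$ is a standard cutoff and $\gamma<0$ (and then separately $0<\gamma<p-1$). After expanding $|\nabla u|^{p-2}\nabla u\cdot\nabla(\eta^{p}u^{\gamma})$ and absorbing the gradient term with Young's inequality, one obtains the Caccioppoli inequality
\begin{equation*}
\int \eta^{p}|\nabla(u^{\gamma/p^{\ast}})|^{p}\,dx \;\le\; C(p,\gamma)\int |\nabla\eta|^{p}u^{\gamma+p-p}\,dx,
\end{equation*}
with an appropriate exponent depending on whether $\gamma+p-1$ is positive or negative. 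Combining this with the Sobolev embedding $W^{1,p}\hookrightarrow L^{p^{\ast}}$ (respectively with the Moser--Trudinger embedding when $p\ge n$) yields a reverse-H\"older estimate of the form $\|u^{\gamma}\|_{L^{\kappa t}(B_{r})}\le (C/(R-r))^{n/t}\|u^{\gamma}\|_{L^{t}(B_{R})}$ with $\kappa=p^{\ast}/p>1$. Iterating this with a geometric sequence of radii between $2$ and $1$ gives, for every fixed $t>0$,
\begin{equation*}
\sup_{B_{1}} u^{-1} \;\le\; C(n,p,t)\,\Bigl(\int_{B_{2}} u^{-t}\,dx\Bigr)^{1/t},
\end{equation*}
which is exactly the Moser half that controls $\inf_{B_{\rho}} u$ by a negative moment on $B_{2\rho}$.

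The genuinely hard step is the crossover from negative to positive moments. For this I would take the logarithm as a test function: using $\eta^{p} u^{1-p}$ in the weak inequality and integrating by parts yields the logarithmic Caccioppoli estimate
\begin{equation*}
\int_{B_{2}}\eta^{p}|\nabla \log u|^{p}\,dx \;\le\; C(n,p)\int|\nabla\eta|^{p}\,dx,
\end{equation*}
so $w:=\log u$ has its $p$-energy bounded on every interior ball, and in particular $w\in \mathrm{BMO}(B_{2})$ with a constant depending only on $n$ and $p$. The John--Nirenberg inequality then provides an $s_{0}>0$ such that $\int_{B_{2}} e^{s_{0}|w-w_{B_{2}}|}\,dx\le C$, which after multiplying out gives
\begin{equation*}
\Bigl(\int_{B_{2}} u^{s_{0}}\Bigr)^{1/s_{0}}\Bigl(\int_{B_{2}} u^{-s_{0}}\Bigr)^{1/s_{0}} \;\le\; C(n,p).
\end{equation*}
Combining this crossover with the Moser sup-estimate for $u^{-1}$ established above yields the weak Harnack bound at the critical exponent $s_{0}$. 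To extend the range of $s$ up to $n(p-1)/(n-p)$ (or any $s<\infty$ when $p\ge n$), I would perform the Moser iteration on positive powers as well: testing with $\eta^{p}u^{\gamma}$ for $0<\gamma<p-1$ and iterating gives $L^{s}\to L^{s_{0}}$ bounds for $s<n(p-1)/(n-p)$, and composing with the previous inequality produces the stated estimate. The main obstacle throughout is the logarithmic estimate together with John--Nirenberg, which is what makes the inequality nontrivial, since without it one cannot pass from negative to positive integral exponents of $u$; all the other steps are by now standard variations on the linear Moser iteration.
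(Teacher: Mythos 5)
First, note that the paper does not prove this statement at all: it is quoted directly from Trudinger's 1967 paper (Theorem 1.2 of \cite{b:Trudinger}) and used as a black box, so there is no internal proof to compare against. Your outline is the classical Moser-iteration proof of the weak Harnack inequality, which is essentially Trudinger's own argument: Caccioppoli estimates for powers of $u$, a downward iteration bounding $\sup u^{-1}$ by a negative moment, the logarithmic estimate plus John--Nirenberg to cross from negative to positive moments, and a further iteration to reach all exponents $s<n(p-1)/(n-p)$. The architecture is right, and you correctly identify the crossover via $\log u\in\mathrm{BMO}$ as the non-routine step.

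There are, however, two concrete problems in the details. The displayed Caccioppoli inequality is garbled: the left side should involve $\nabla\bigl(u^{(\gamma+p-1)/p}\bigr)$ rather than $\nabla\bigl(u^{\gamma/p^{\ast}}\bigr)$, and the right side should carry $u^{\gamma+p-1}$, not $u^{\gamma+p-p}=u^{\gamma}$. More seriously, your claim that one can test the supersolution inequality with $\eta^{p}u^{\gamma}$ for $0<\gamma<p-1$ is wrong as stated: the term $\gamma\int\eta^{p}u^{\gamma-1}|\nabla u|^{p}$ enters with the sign of $\gamma$, so it can be absorbed to the left-hand side only when $\gamma<0$; for $\gamma>0$ the supersolution inequality $\int|\nabla u|^{p-2}\nabla u\cdot\nabla\varphi\geq 0$ gives a lower bound on the energy and no Caccioppoli estimate. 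The positive moments are in fact obtained from the negative range $1-p<\gamma<0$, for which $\beta=\gamma+p-1$ sweeps $(0,p-1)$ and the reverse H\"older step $\|u\|_{L^{\kappa\beta}}\lesssim\|u\|_{L^{\beta}}$ with $\kappa=n/(n-p)$ iterates up to $\kappa(p-1)=n(p-1)/(n-p)$ --- which is also why that exponent is the natural endpoint of the admissible range of $s$. With this correction (and the usual Morrey or large-exponent Sobolev modification when $p\geq n$), your scheme does yield the stated theorem.
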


Following a reasoning based on finite cover by balls of any compact set (see
the proof of Corollary~1.2.9 in \cite{MR1919991} or Theorem~1.3 in \cite%
{b:RP_TMNA}), we obtain a variant of the Harnack inequality, stated in \cite%
{b:RP JFPTAA}, which plays a crucial role in our investigation.

\begin{thm}
\label{thm:Trudinger-ineq} For each $s$ as in Theorem \emph{\ref%
{lem:Trudinger}} and each compact $D\subset \Omega ,$ there is a constant $%
M=M\left( n,p,s,D,\Omega \right) >0$ such that
\begin{equation}
\inf_{D}u\geq M\left( \int_{D}u^{s}dx\right) ^{\frac{1}{s}}.
\label{eq:Trudinger-ineq}
\end{equation}%
for every nonnegative $p$-superharmonic function $u$ in $\Omega .$
\end{thm}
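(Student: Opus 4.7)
The proof patches Trudinger's local inequality (Theorem~\ref{lem:Trudinger}) into a global one on $D$ by covering $D$ with finitely many balls of fixed small radius $\rho$ (each with its $3\rho$-concentric ball inside $\Omega$), and by chaining the resulting local estimates using the connectedness of the domain $\Omega$.

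First, since $D$ is compact in the open connected set $\Omega$, I enlarge $D$ to a compact connected $D'\subset\Omega$ and pick $\rho>0$ small enough that $\overline{B_{3\rho}(x)}\subset\Omega$ for every $x\in D'$. I then extract a finite cover $\{B_{\rho/2}(x_i)\}_{i=1}^{N}$ of $D'$ with centers $x_i\in D'$. Connectedness of $D'$ forces the intersection graph on $\{1,\dots,N\}$, with $i\sim j$ iff $B_{\rho/2}(x_i)\cap B_{\rho/2}(x_j)\neq\emptyset$, to be connected, hence of diameter at most $N-1$. Setting $m_i:=\inf_{B_{\rho}(x_i)} u$, Theorem~\ref{lem:Trudinger} gives
\begin{equation*}
m_i\,\ge\,C\,\rho^{-n/s}\Bigl(\int_{B_{2\rho}(x_i)} u^{s}\,dx\Bigr)^{1/s},\qquad i=1,\dots,N.
\end{equation*}
Whenever $i\sim j$, the triangle inequality yields $|x_i-x_j|<\rho$ and hence $B_{\rho}(x_i)\subset B_{2\rho}(x_j)$; combining this inclusion with the trivial lower bound $\int_{B_{\rho}(x_i)}u^s\,dx\ge|B_{\rho}|\,m_i^s$ and the local estimate above for $m_j$ produces a constant $\kappa=\kappa(n,p,s)>0$ with $m_j\ge\kappa\,m_i$. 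Iterating along the connected graph, one obtains $\max_i m_i\le\kappa^{-(N-1)}\min_i m_i$.

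Finally, since $\{B_{\rho/2}(x_i)\}$ covers $D$, every $y\in D$ lies in some $B_{\rho/2}(x_i)\subset B_{\rho}(x_i)$, giving $u(y)\ge m_i$, whence $\inf_D u\ge\min_i m_i$. On the other hand, summing the local estimates,
\begin{equation*}
\int_D u^s\,dx\,\le\,\sum_{i=1}^{N}\int_{B_{2\rho}(x_i)} u^s\,dx\,\le\,C^{-s}\rho^{n}\sum_{i=1}^{N} m_i^s\,\le\,N\,C^{-s}\rho^{n}\,\kappa^{-s(N-1)}\,(\inf_D u)^s,
\end{equation*}
which is equivalent to \eqref{eq:Trudinger-ineq} with $M=M(n,p,s,D,\Omega)>0$. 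The delicate point is the chain argument, where the connectedness of $\Omega$ is really used; without it, $\inf_D u$ would only be comparable to one local integral on some $B_{2\rho}(x_i)$, which is strictly weaker than comparability with $\int_D u^s$. Everything else is a routine application of Trudinger's local inequality combined with an additive estimate.
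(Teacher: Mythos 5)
Your proof is correct and follows essentially the same route as the paper, which does not write out the argument but simply invokes ``a reasoning based on finite cover by balls of any compact set'' (citing Corollary~1.2.9 of Jost and Theorem~1.3 of Precup's TMNA paper); your covering, chaining via connectedness, and summation of the local Trudinger estimates is exactly that standard argument, carried out in full detail and with the constant's dependence on $n,p,s,D,\Omega$ correctly tracked.
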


By means of the Trudinger-type inequality (\ref{eq:Trudinger-ineq}) we shall
obtain the existence, localization, and multiplicity of nonnegative
solutions for the problem (\ref{eq:diff}).

Let us now consider any two compact sets $D_{1},D_{2}\subset \Omega $ and
their characteristic functions $\chi _{D_{1}},$ $\chi _{D_{2}},$ which we
denote by $\chi _{1},$ $\chi _{2}.$ Since $p,q>2n/(n+1),$ by Theorem \ref%
{thm:Trudinger-ineq} we obtain that there are numbers $s_{1},s_{1}>1$ and
constants $M_{1},M_{1}>0$ such that
\begin{equation}
\inf_{D_{1}}u\geq \Vert u\Vert _{1}:=M_{1}\left(
\int_{D_{1}}u^{s_{1}}dx\right) ^{\frac{1}{s_{1}}},\ \ \inf_{D_{2}}v\geq
\Vert v\Vert _{2}:=M_{2}\left( \int_{D_{2}}u^{s_{2}}dx\right) ^{\frac{1}{%
s_{2}}}  \label{eq:Harnack-tmp}
\end{equation}%
for every nonnegative $p$-superharmonic function $u$ and $q$-superharmionic
fuction $v.$ Using the natural partial order in $L^{\infty }(\Omega )$ we
can rewrite the inequalities (\ref{eq:Harnack-tmp}) in the following way:
\begin{equation}
u\geq \Vert u\Vert _{1}\chi _{1},\ \ \ v\geq \Vert v\Vert _{2}\chi _{2}.
\label{eq:Harnack-ineq}
\end{equation}%
The nonnegativity of $f$ and $g$ gives that $N_{1}(u,v)$ and $N_{2}(u,v)$
are superharmonic for any $u,v\in K^{0}.$ Therefore we obtain in a similar
way as in the case of one equation studied in \cite{b:RP JFPTAA}, that
\begin{equation*}
N_{1}(K^{0}\times K^{0})\subset K_{1},\ \ N_{2}(K^{0}\times K^{0})\subset
K_{2},
\end{equation*}%
where
\begin{equation*}
K_{i}=\left\{ u\in K^{0}:u\geq \left\Vert u\right\Vert _{i}\chi _{i}\right\}
,\ \ \ \chi _{i}=\chi _{D_{i}}\ \ \left( i=1,2\right) .
\end{equation*}%
Therefore, $N:K_{1}\times K_{2}\rightarrow K_{1}\times K_{2}.$ In addition,
the complete continuity of $S_{p}$ and $S_{q}$ guarantees that $N$ is
completely continuous.

\begin{rem}
\label{rem:better-constants-appl} Note that the assumption (\ref{br}) is
satisfied in our context of differential equations, since the constant
function $h_{i}^{0}\equiv 1$ satisfies \eqref{br}. Indeed, since the
constant function $1$ is $p\ \left( q\right) $-superharmionic in $\Omega ,$
one has $h_{i}^{0}\in K_{i}.$ Also, the conditions $\left\vert
h_{i}^{0}\right\vert =1$ and $u\leq h_{i}^{0}$ for every $u\in K_{i}$ with $%
\left\vert u\right\vert \leq 1$ are trivially satisfied. Therefore we can
use Remark \ref{rem:betterconstant}. This yields that, instead of the
relation \eqref{5} between radii, we can consider the relation
\begin{equation}
r_{i}<\Vert 1\Vert _{i}R_{i}\ \ \left( i=1,2\right) ,  \label{5'}
\end{equation}%
and we can improve the constants $r_{i}/\Vert \chi _{i}\Vert $ in (\ref{6}),
(\ref{6-strong}), (\ref{6-stronger}), (\ref{6'}), (\ref{6''}), (\ref{6varrho}%
), replacing them by $r_{i}.$

Notice that if in our case, $\phi _{i}$ is chosen to be $\chi
_{i}/\left\vert \chi _{i}\right\vert =\chi _{i},$ then the relation (\ref{5}%
) becomes $r_{i}<\left\Vert 1\right\Vert _{i}^{2}R_{i},$ which is more
restrictive than (\ref{5'}).
\end{rem}

We can now state a result for the existence and localization of a
nonnegative solution of the system~\ref{eq:diff} with both nonzero
components (i.e. a positive solution).

\begin{thm}
\label{thm:diffSol} Let $r_{1},r_{2},R_{1},R_{2}$ satisfy \emph{(\ref{5'})}.
Assume that the following conditions hold:
\begin{equation}
\begin{split}
\frac{\displaystyle\max_{(x,\tau ,\sigma )\in \overline{\Omega }\times
\lbrack 0,R_{1}]\times \lbrack 0,R_{2}]}f(x,\tau ,\sigma )}{R_{1}^{p-1}}%
=:\alpha _{1}& \leq A_{p}:=\frac{1}{|S_{p}(1)|^{p-1}},\  \\
\frac{\displaystyle\max_{(x,\tau ,\sigma )\in \overline{\Omega }\times
\lbrack 0,R_{1}]\times \lbrack 0,R_{2}]}g(x,\tau ,\sigma )}{R_{2}^{q-1}}%
=:\alpha _{2}& \leq A_{q}:=\frac{1}{|S_{q}(1)|^{q-1}},
\end{split}
\label{eq:ind1}
\end{equation}%
and%
\begin{equation}
\begin{split}
\frac{\displaystyle\min_{(x,\tau ,\sigma )\in D_{1}\times \lbrack
r_{1},R_{1}]\times \lbrack 0,R_{2}]}f(x,\tau ,\sigma )}{r_{1}^{p-1}}=:\beta
_{1}& >B_{1,p}:=\frac{1}{\Vert S_{p}(\chi _{1})\Vert ^{p-1}}, \\
\frac{\displaystyle\min_{(x,\tau ,\sigma )\in D_{2}\times \lbrack
0,R_{1}]\times \lbrack r_{2},R_{2}]}g(x,\tau ,\sigma )}{r_{2}^{q-1}}=:\beta
_{2}& >B_{2,q}:=\frac{1}{\Vert S_{q}(\chi _{2})\Vert ^{q-1}}.
\end{split}
\label{eq:ind0}
\end{equation}%
Then there exists a positive solution $(u,v)$ of \emph{(\ref{eq:diff})} such
that $\left\vert u\right\vert \leq R_{1},$ $\left\vert v\right\vert \leq
R_{2},$ $\left\Vert u\right\Vert \geq r_{1}$ and $\left\Vert v\right\Vert
\geq r_{2}.$
\end{thm}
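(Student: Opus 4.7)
The plan is to apply Theorem \ref{thm:abstr1'} (the positive/both-nonzero-component version) to the fixed point operator $N=(N_1,N_2)$, taking advantage of the sharper constants afforded by Remark \ref{rem:better-constants-appl}: condition \eqref{5} may be replaced by \eqref{5'} (hence is satisfied by assumption), and the right-hand sides $r_i/\|\chi_i\|$ in \eqref{6'} may be replaced by $r_i$. Thus it suffices to verify the two analytic estimates
\begin{equation*}
\sup_{(u,v)\in C}|N_i(u,v)|\leq R_i\quad(i=1,2),\qquad
\inf_{\substack{(u,v)\in C\\ \|u\|=r_1,\ \|v\|\geq r_2}}\|N_1(u,v)\|\geq r_1,\qquad
\inf_{\substack{(u,v)\in C\\ \|u\|\geq r_1,\ \|v\|=r_2}}\|N_2(u,v)\|\geq r_2.
\end{equation*}
Both are obtained from isotonicity of $S_p,S_q$ and the homogeneity identity $S_r(\lambda w)=\lambda^{1/(r-1)}S_r(w)$ for $\lambda>0$, which is immediate from the fact that $-\Delta_r$ is positively homogeneous of degree $r-1$.

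For the upper bound, take $(u,v)\in C$, so $0\leq u\leq R_1$ and $0\leq v\leq R_2$ in $L^\infty$. Then by \eqref{eq:ind1} the superposition $F(u,v)$ satisfies $F(u,v)\leq\alpha_1 R_1^{p-1}\cdot 1$ pointwise, so isotonicity and homogeneity of $S_p$ give
\begin{equation*}
N_1(u,v)=S_p(F(u,v))\leq S_p(\alpha_1 R_1^{p-1})=\alpha_1^{1/(p-1)}R_1\, S_p(1).
\end{equation*}
Taking the $L^\infty$-norm and using $\alpha_1\leq A_p=1/|S_p(1)|^{p-1}$ yields $|N_1(u,v)|\leq R_1$; the estimate for $N_2$ is identical with $p,q$ interchanged.

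For the lower bounds, suppose $(u,v)\in C$ with $\|u\|=r_1$ and $\|v\|\geq r_2$. The Harnack inequality \eqref{eq:Harnack-ineq} gives $u\geq r_1\chi_1$, so $u(x)\in[r_1,R_1]$ for $x\in D_1$, while $v(x)\in[0,R_2]$ everywhere. Invoking \eqref{eq:ind0} we obtain the pointwise bound $F(u,v)\geq\beta_1 r_1^{p-1}\chi_1$, and then isotonicity and homogeneity of $S_p$ together with monotonicity of the seminorm $\|\cdot\|$ yield
\begin{equation*}
\|N_1(u,v)\|\geq\|S_p(\beta_1 r_1^{p-1}\chi_1)\|=\beta_1^{1/(p-1)} r_1\|S_p(\chi_1)\|>r_1,
\end{equation*}
where the strict inequality uses $\beta_1>B_{1,p}=1/\|S_p(\chi_1)\|^{p-1}$. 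The symmetric argument handles $N_2$.

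Having verified all the hypotheses of Theorem \ref{thm:abstr1'} in its improved form, we obtain a fixed point $(u,v)\in K_1\times K_2$ of $N$ — equivalently, a weak solution of \eqref{eq:diff} — satisfying $|u|\leq R_1$, $|v|\leq R_2$, $\|u\|\geq r_1>0$ and $\|v\|\geq r_2>0$. The strict positivity of both seminorms forces $u\not\equiv 0$ and $v\not\equiv 0$, so by the (strong) maximum principle for $p$- and $q$-superharmonic functions both components are strictly positive in $\Omega$. The delicate point of the plan is the clean handling of the homogeneity of the nonlinear solution operators $S_p,S_q$, which is what makes the thresholds $A_r$ and $B_{i,r}$ line up sharply with the assumptions \eqref{eq:ind1}--\eqref{eq:ind0}; beyond this, everything reduces to pointwise monotonicity arguments.
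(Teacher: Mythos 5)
Your proposal is correct and follows essentially the same route as the paper: verify the invariance bound $|N_i(u,v)|\leq R_i$ from \eqref{eq:ind1} via isotonicity and $(1/(r-1))$-homogeneity of $S_r$, verify the lower seminorm bounds from \eqref{eq:ind0} using $u\geq \Vert u\Vert\chi_1$ on $D_1$, and then invoke Theorem \ref{thm:abstr1'} with the improved constants of Remark \ref{rem:better-constants-appl}. The only cosmetic difference is that you spell out the homogeneity identity and the final strict-positivity step (which the paper delegates to Remark \ref{solutions-regular}) explicitly.
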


\begin{proof}
If $\left( u,v\right) \in C$ and $x\in \Omega ,$ then $0\leq u(x)\leq R_{1},$
$0\leq v(x)\leq R_{2}$ and therefore

\begin{equation*}
F(u,v)(x)=f(x,u(x),v(x))\leq \alpha _{1}R_{1}^{p-1}.
\end{equation*}%
By the isotonicity of $S_{p}$ and the monotonicity of $|\cdot|$ we have
\begin{equation*}
|N_{1}(u,v)|\leq |S_{p}(\alpha _{1}R_{1}^{p-1})|\leq \alpha
_{1}^{1/(p-1)}R_{1}|S_{p}(1)| \leq R_{1}.
\end{equation*}%
A similar estimate is true for $N_{2}.$ Hence, the condition \eqref{4bis}
from Theorem \ref{thm:abstr1'} is satisfied. Now, let $\left( u,v\right) \in
C$ be such that $\Vert u\Vert =r_{1}.$ For $x\in D_{1},$ we have $u(x)\geq
\Vert u\Vert =r_{1}$ and, as a consequence,
\begin{equation*}
F(u,v)(x)=f(x,u(x),v(x))\geq \beta _{1}r_{1}^{p-1}.
\end{equation*}%
By the isotonicity of $S_{p}$ and the monotonicity of $\Vert \cdot \Vert $ we
have
\begin{equation*}
\left\Vert N_{1}(u,v)\right\Vert \geq \left\Vert S_{p}(\beta
_{1}r_{1}^{p-1}\chi _{1})\right\Vert =\beta _{1}{}^{1/(p-1)}r_{1}\left\Vert
S_{p}(\chi _{1})\right\Vert >r_{1}.
\end{equation*}%
A similar estimate is true for $N_{2}.$ Hence, the condition (\ref{6'}) from
Theorem \ref{thm:abstr1'}, modified in accordance with Remark \ref%
{rem:betterconstant}, is satisfied.

The assertion now follows from Theorem \ref{thm:abstr1'} and Remark \ref%
{rem:better-constants-appl}.
\end{proof}

We now present the relationship between the constants that arise in Theorem~%
\ref{thm:diffSol}.

\begin{prop}
Let $\lambda _{1,p}$ be the first eigenvalue of the $p$-Laplace operator $%
-\Delta _{p}$ under the Dirichlet condition. The following relations hold
\begin{equation*}
A_{p}\leq \lambda _{1,p}\leq B_{1,p}\text{ \ and\ }A_{q}\leq \lambda
_{1,q}\leq B_{2,q}.
\end{equation*}
\end{prop}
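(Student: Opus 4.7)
The plan is to sandwich $S_p(1)$ and $S_p(\chi_1)$ between rescalings of a positive first Dirichlet eigenfunction $\varphi_1$ of $-\Delta_p$, and read off the two bounds by weak comparison. By symmetry it suffices to treat the $p$-inequalities; the $q$-case is identical with $\chi_2$, $D_2$ and $\|\cdot\|_2$ in place of $\chi_1$, $D_1$ and $\|\cdot\|_1$. Two standard facts drive everything: the $(p-1)$-homogeneity $-\Delta_p(a u)=a^{p-1}(-\Delta_p u)$ for $a>0$, and the weak comparison principle, namely that $u,v\in W_0^{1,p}(\Omega)$ with $-\Delta_p u\le -\Delta_p v$ weakly force $u\le v$ in $\Omega$. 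Throughout, $\varphi_1>0$ will denote a first eigenfunction, $-\Delta_p\varphi_1=\lambda_{1,p}\varphi_1^{p-1}$, with $\varphi_1=0$ on $\partial\Omega$.

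For the lower bound $A_p\le \lambda_{1,p}$ I normalize $\varphi_1$ so that $|\varphi_1|=1$. Setting $a:=\lambda_{1,p}^{-1/(p-1)}$, homogeneity yields $-\Delta_p(a\varphi_1)=\varphi_1^{p-1}\le 1=-\Delta_p S_p(1)$ weakly with the same (zero) Dirichlet data, so comparison gives $a\varphi_1\le S_p(1)$. Passing to the $L^\infty$-norm produces $\lambda_{1,p}^{-1/(p-1)}=a\le|S_p(1)|$, which rearranges to $A_p=|S_p(1)|^{-(p-1)}\le\lambda_{1,p}$.

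For the upper bound $\lambda_{1,p}\le B_{1,p}$ I renormalize $\varphi_1$ so that $\min_{D_1}\varphi_1=1$; this is legitimate because $\varphi_1$ is continuous on $\overline\Omega$ and strictly positive in $\Omega$, hence bounded away from zero on the compact set $D_1\subset\Omega$. Then $\varphi_1^{p-1}\ge 1=\chi_1$ on $D_1$ and $\varphi_1^{p-1}\ge 0=\chi_1$ off $D_1$, so with $a:=\lambda_{1,p}^{-1/(p-1)}$ one has $-\Delta_p(a\varphi_1)=\varphi_1^{p-1}\ge \chi_1=-\Delta_p S_p(\chi_1)$ weakly, and comparison gives $S_p(\chi_1)\le a\varphi_1$. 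Monotonicity of the seminorm $\|\cdot\|_1$ then delivers $\|S_p(\chi_1)\|_1\le a\|\varphi_1\|_1$. To control $\|\varphi_1\|_1$ I observe that $\varphi_1$ is itself $p$-superharmonic (since $\lambda_{1,p}\varphi_1^{p-1}\ge 0$), so Theorem~\ref{thm:Trudinger-ineq} applied to $\varphi_1$ gives $\|\varphi_1\|_1\le\inf_{D_1}\varphi_1=1$. Combining the two estimates, $\|S_p(\chi_1)\|_1\le\lambda_{1,p}^{-1/(p-1)}$, which is exactly $\lambda_{1,p}\le B_{1,p}$.

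The only genuine obstacle I anticipate is justifying the second normalization, i.e.\ that $\min_{D_1}\varphi_1>0$. This needs the continuity of $\varphi_1$ on $\overline\Omega$ together with its strict positivity inside $\Omega$, both of which are classical consequences of the $C^{1,\alpha}$ regularity theory and V\'azquez's strong maximum principle for the $p$-Laplacian; once these are invoked, the rest is pure homogeneity-and-comparison bookkeeping. Repeating the same argument with $q$, $\chi_2$ and $D_2$ yields $A_q\le\lambda_{1,q}\le B_{2,q}$ and completes the proof.
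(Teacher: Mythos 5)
Your proof is correct and follows essentially the same route as the paper: test against a positive first eigenfunction, use the $(p-1)$-homogeneity together with the comparison principle (which is exactly the isotonicity of $S_p$ that the paper invokes), and apply the monotonicity of $|\cdot|$ and $\|\cdot\|_1$ plus the Harnack inequality. The only cosmetic difference is that for the upper bound the paper normalizes $\|\varphi_1\|_1=1$ and uses $\varphi_1\ge\|\varphi_1\|_1\chi_1$, whereas you normalize $\min_{D_1}\varphi_1=1$ and read the same Harnack inequality in the other direction; the two are equivalent.
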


\begin{proof}
Let $u>0$ be a positive eigenfunction corresponding to the eigenvalue $%
\lambda _{1,p}.$ Then $-\Delta _{p}u=\lambda _{1,p}u^{p-1}$ and since $S_{p}$
is homogeneous of degree $1/(p-1),$ we have
\begin{equation*}
u=S_{p}\left( \lambda _{1,p}u^{p-1}\right) =\lambda
_{1,p}^{1/(p-1)}S_{p}\left( u^{p-1}\right) .
\end{equation*}

a) Assume that $|u|=1.$ Then $0\leq u\leq 1$ and, by the isotonicity of $%
S_{p}$ and the monotonicity of the norm, we have
\begin{equation*}
1=|u|=\lambda _{1,p}^{1/(p-1)}\left\vert S_{p}\left( u^{p-1}\right)
\right\vert \leq \lambda _{1,p}^{1/(p-1)}|S_{p}(1)|.
\end{equation*}%
This implies that $A_{p}\leq \lambda _{1,p}.$

b) Now assume that $\Vert u\Vert =1.$ Then, by the Harnack inequality, we
have $u\geq \chi _{D_{1}}.$ By the isotonicity of $S_{p}$ and the
monotonicity of the semi-norm we obtain
\begin{equation*}
1=\Vert u\Vert =\lambda _{1,p}^{1/(p-1)}\left\Vert S_{p}\left(
u^{p-1}\right) \right\Vert \geq \lambda _{1,p}^{1/(p-1)}\Vert S_{p}(\chi
_{D_{1}})\Vert .
\end{equation*}%
This implies that $B_{1,p}\geq \lambda _{1,p}.$ Similarly one can prove that
$A_{q}\leq \lambda _{1,q}\leq B_{2,q}.$
\end{proof}

\begin{thm}
\label{thm:diffSol or} Let $r_{1},r_{2},R_{1},R_{2}$ satisfy \emph{(\ref{5'})%
} and let $\tilde{R}_{1}\leq R_{1}$, $\tilde{R}_{2}\leq R_{2}$. Assume that
the condition \emph{(\ref{eq:ind1})} is satisfied and that%
\begin{equation}
\begin{split}
\frac{\displaystyle\min_{(x,\tau ,\sigma )\in D_{1}\times {[0,\tilde{R}_{1}]}%
\times \lbrack 0,\tilde{R}_{2}]}f(x,\tau ,\sigma )}{r_{1}^{p-1}}=:\beta
_{1}^{\prime }& >B_{1,p}\ {\text{ \ or }} \\
\frac{\displaystyle\min_{(x,\tau ,\sigma )\in D_{2}\times \lbrack 0,\tilde{R}%
_{1}]\times {[0,\tilde{R}_{2}]}}g(x,\tau ,\sigma )}{r_{2}^{q-1}}=:\beta
_{2}^{\prime }& >B_{2,q}.
\end{split}
\label{eq:ind0 or}
\end{equation}%
Then there exists a nontrivial nonnegative solution $(u,v)$ of \emph{(\ref%
{eq:diff})} such that $\left\vert u\right\vert \leq R_{1},$ $\left\vert
v\right\vert \leq R_{2}$ and
\begin{equation}
\Vert u\Vert \geq r_{1}\text{ \ or\ }\Vert v\Vert \geq r_{2}\ \text{ or\ }%
|u|>\tilde{R}_{1}\ \text{ or\ }|v|>\tilde{R}_{2}.
\label{eq:conclusion ororor}
\end{equation}%
In particular, if $\tilde{R}_{1}=R_{1}$ and $\tilde{R}_{2}=R_{2},$ then
there exists a nontrivial nonnegative solution $(u,v)$ with either $%
\left\Vert u\right\Vert \geq r_{1}$ or $\left\Vert v\right\Vert \geq r_{2}.$
\end{thm}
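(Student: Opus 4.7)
The plan is to apply Theorem \ref{thm:abstr1or A} in the same framework already used for Theorem \ref{thm:diffSol}: the same operator $N=(N_1,N_2)=(S_p\circ F,\,S_q\circ G)$, the same cones $K_1,K_2$, the same set $C$, and the improved radius relation \eqref{5'} supplied by Remark \ref{rem:better-constants-appl}. The novelty relative to Theorem \ref{thm:diffSol} is that only one of the two lower-bound conditions on $f,g$ is assumed, so only one of the two components of $N$ can be forced to stay large on the relevant set $A$; this is exactly the content of the ``or'' hypothesis \eqref{6 or A}.

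First I would verify, exactly as in the proof of Theorem \ref{thm:diffSol}, that the uniform bound $\sup_{(u,v)\in C}|N_i(u,v)|\le R_i$ follows from \eqref{eq:ind1}; this is condition \eqref{4-4}. Next I would take
\[
A := \{(u,v)\in C:\ \|u\|<r_1,\ \|v\|<r_2,\ |u|\le \tilde R_1,\ |v|\le \tilde R_2\},
\]
which is manifestly a subset of $U=\{(u,v)\in C:\ \|u\|<r_1,\ \|v\|<r_2\}$. For any $(u,v)\in A$ one has $u(x)\in[0,\tilde R_1]$ and $v(x)\in[0,\tilde R_2]$ a.e., so on $D_1$ the pointwise bound $F(u,v)(x)\ge \beta_1' r_1^{p-1}$ holds (and similarly $G(u,v)(x)\ge \beta_2' r_2^{q-1}$ on $D_2$). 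If the first alternative in \eqref{eq:ind0 or} is in force, then by the isotonicity of $S_p$, the monotonicity of the seminorm, and the strict inequality $\beta_1'>B_{1,p}$, the same computation as in the proof of Theorem \ref{thm:diffSol} yields
\[
\|N_1(u,v)\|\ \ge\ (\beta_1')^{1/(p-1)}\,r_1\,\|S_p(\chi_1)\|\ >\ r_1
\]
uniformly in $(u,v)\in A$; the second alternative symmetrically gives $\|N_2(u,v)\|>r_2$ on $A$. In either case condition \eqref{6 or A} (with the improved constant from Remark \ref{rem:better-constants-appl}) is satisfied.

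Theorem \ref{thm:abstr1or A} then produces a fixed point $(u,v)\in K$ with $|u|\le R_1$, $|v|\le R_2$ and $(u,v)\notin A$. The negation of the four defining inequalities of $A$ is precisely the disjunction \eqref{eq:conclusion ororor}, and since $r_1,r_2>0$ and $\tilde R_1,\tilde R_2\ge 0$, each of its four alternatives forces at least one component to be nonzero, so nontriviality is automatic. In the special case $\tilde R_1=R_1$, $\tilde R_2=R_2$, the invariance $|u|\le R_1$, $|v|\le R_2$ excludes the last two alternatives, leaving $\|u\|\ge r_1$ or $\|v\|\ge r_2$. The only delicate point is the selection of $A$: it must lie in $U$ so that the abstract theorem applies, yet be large enough that its complement in $C$ is exactly \eqref{eq:conclusion ororor}; the choice above is tailored precisely so that the range of values of $f,g$ that appears in the minima of \eqref{eq:ind0 or} is the same as the range swept by $(u(x),v(x))$ when $(u,v)\in A$.
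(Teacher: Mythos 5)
Your proposal is correct and follows essentially the same route as the paper: it verifies condition \eqref{4-4} from \eqref{eq:ind1} as in Theorem \ref{thm:diffSol}, chooses the same set $A=\{(u,v):\|u\|<r_1,\ \|v\|<r_2,\ |u|\le\tilde R_1,\ |v|\le\tilde R_2\}$, derives \eqref{6 or A} (with the improved constants of Remark \ref{rem:better-constants-appl}) from \eqref{eq:ind0 or} via the isotonicity of $S_p$, and concludes by Theorem \ref{thm:abstr1or A}. The only difference is that you spell out the nontriviality and the special case $\tilde R_i=R_i$ explicitly, which the paper leaves implicit.
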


\begin{proof}
As in the previous proof we know that the condition \eqref{4-4} from Theorem~%
\ref{thm:abstr1or A} is satisfied. Let
\begin{equation*}
(u,v)\in A:=%
\mbox{$\left\{(u,v):\;\| u\|<r_{1},\ \| v\|< r_{2},\ |u|\leq
\tilde R_1,\ |v|\leq \tilde R_2\right\}$}.
\end{equation*}%
Assume that the first inequality in (\ref{eq:ind0 or}) holds. Then for $x\in
D_{1}$ we have
\begin{equation*}
F(u,v)(x)=f(x,u(x),v(x))\geq \beta _{1}^{\prime }r_{1}^{p-1}
\end{equation*}%
and by the isotonicity of $S_{p}$ and the monotonicity of $\Vert \cdot \Vert ,$
\begin{equation*}
\left\Vert N_{1}(u,v)\right\Vert \geq \left\Vert S_{p}(\beta _{1}^{\prime
}r_{1}^{p-1}\chi _{1})\right\Vert =\beta _{1}^{\prime
1/(p-1)}r_{1}\left\Vert S_{p}(\chi _{1})\right\Vert >r_{1}.
\end{equation*}%
A similar estimate holds for $N_{2}.$ Hence, the condition (\ref{6 or A})
from Theorem \ref{thm:abstr1or A}, modified in accordance with Remark \ref%
{rem:betterconstant}, is satisfied. Therefore, we can apply Theorem \ref%
{thm:abstr1or A} and Remark \ref{rem:better-constants-appl}, and obtain a
solution $(u,v)\not\in A.$ Clearly, this is equivalent to (\ref%
{eq:conclusion ororor}).
\end{proof}

\begin{rem}
The importance of Theorem \ref{thm:diffSol or} consists in the fact that the
assumption (\ref{eq:ind0 or}) involves only one component of the system
nonlinearity $(f,g).$ Therefore, it allows different kinds of growth of $f$
and $g$ near the origin.
A similar remark also applies to the following theorem.
\end{rem}

\begin{thm}
\label{thm:diff3sols} Let $\rho _{i},\ r_{i},\ R_{i}$ satisfy $0<\rho
_{i}<r_{i}<\Vert 1\Vert _{i}R_{i}\ \ \left( i=1,2\right) .$ Assume that:
\begin{equation}
\frac{\displaystyle\max_{(x,\tau ,\sigma )\in \overline{\Omega }\times
\lbrack 0,R_{1}]\times \lbrack 0,R_{2}]}f(x,\tau ,\sigma )}{R_{1}^{p-1}}\leq
A_{p},\ \ \frac{\displaystyle\max_{(x,\tau ,\sigma )\in \overline{\Omega }%
\times \lbrack 0,R_{1}]\times \lbrack 0,R_{2}]}g(x,\tau ,\sigma )}{%
R_{2}^{q-1}}\leq A_{q},  \label{eq:R-ind1}
\end{equation}%
\begin{equation}
\frac{\displaystyle\max_{(x,\tau ,\sigma )\in \overline{\Omega }\times
\lbrack 0,\rho _{1}]\times \lbrack 0,\rho _{2}]}f(x,\tau ,\sigma )}{\rho
_{1}^{p-1}}<A_{p},\ \ \frac{\displaystyle\max_{(x,\tau ,\sigma )\in
\overline{\Omega }\times \lbrack 0,\rho _{1}]\times \lbrack 0,\rho
_{2}]}g(x,\tau ,\sigma )}{\rho _{2}^{q-1}}<A_{q},  \label{eq:rho-ind1}
\end{equation}%
\begin{equation}
\frac{\displaystyle\min_{(x,\tau ,\sigma )\in D_{1}\times \lbrack
r_{1},R_{1}]\times \lbrack 0,R_{2}]}f(x,\tau ,\sigma )}{r_{1}^{p-1}}%
>B_{1,p},\ \ \frac{\displaystyle\min_{(x,\tau ,\sigma )\in D_{2}\times
\lbrack 0,R_{1}]\times \lbrack r_{2},R_{2}]}g(x,\tau ,\sigma )}{r_{2}^{q-1}}%
>B_{2,q}.  \label{eq:r-ind0}
\end{equation}%
Then there exist three nonnegative solutions $(u_{i},v_{i})$ $\left(
i=1,2,3\right) $ of the system~\eqref{eq:diff} with%
\begin{eqnarray*}
\left\vert u_{1}\right\vert &<&\rho _{1},\ \ \left\vert v_{1}\right\vert
<\rho _{2}\ \ \left( \text{possibly zero solution}\right) ; \\
\left\Vert u_{2}\right\Vert &<&r_{1},\ \left\Vert v_{2}\right\Vert <r_{2};\
\left\vert u_{2}\right\vert >\rho _{1}\ \text{or }\left\vert
v_{2}\right\vert >\rho _{2}\ \ \left( \text{possibly one solution component
zero}\right) ; \\
\left\Vert u_{3}\right\Vert &>&r_{1},\ \left\Vert v_{3}\right\Vert >r_{2}\ \
\left( \text{both solution components nonzero}\right) .
\end{eqnarray*}%
Moreover, having given numbers $0<\varrho _{i}<\Vert 1\Vert \rho _{i}$ $%
\left( i=1,2\right) ,$

\emph{(i)} if
\begin{equation}
\frac{\displaystyle\min_{(x,\tau ,\sigma )\in D_{1}\times \lbrack \varrho
_{1},\rho _{1}]\times \lbrack 0,\rho _{2}]}f(x,\tau ,\sigma )}{\varrho
_{1}^{p-1}}>B_{1,p}\ \text{ and}\ \ \frac{\displaystyle\min_{(x,\tau ,\sigma
)\in D_{2}\times \lbrack 0,\rho _{1}]\times \lbrack \varrho _{2},\rho
_{2}]}g(x,\tau ,\sigma )}{\varrho _{2}^{q-1}}>B_{2,q}  \label{eq:ind0-u_1}
\end{equation}%
then $\Vert u_{1}\Vert \geq \varrho _{1}$ and $\Vert v_{1}\Vert \geq \varrho
_{2};$

\emph{(ii)} if
\begin{equation}
\frac{\displaystyle\min_{(x,\tau ,\sigma )\in D_{1}\times \lbrack 0,\tilde{%
\rho}_{1}]\times \lbrack 0,\tilde{\rho}_{2}]}f(x,\tau ,\sigma )}{\varrho
_{1}^{p-1}}>B_{1,p}\ \text{ or }\ \ \frac{\displaystyle\min_{(x,\tau ,\sigma
)\in D_{2}\times \lbrack 0,\tilde{\rho}_{1}]\times \lbrack 0,\tilde{\rho}%
_{2}]}g(x,\tau ,\sigma )}{\varrho _{2}^{q-1}}>B_{2,q}  \label{eq:ind0 or-u_1}
\end{equation}%
for some $\tilde{\rho}_{1}\leq \rho _{1},$ $\tilde{\rho}_{2}\leq \rho _{2},$
then $\Vert u_{1}\Vert \geq \varrho _{1}$ or $\Vert v_{1}\Vert \geq \varrho
_{2}$ or $|u_{1}|>\tilde{\rho}_{1}$ or $|v_{1}|>\tilde{\rho}_{2}.$
\end{thm}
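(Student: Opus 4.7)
The plan is to apply Theorem~\ref{thm:abstr3sols'} (in its form enhanced by Remark~\ref{rem:better-constants-appl}) to obtain the three solutions, and then invoke Theorem~\ref{thm:abstr3sols''} to sharpen the localization of $(u_1,v_1)$ in parts~(i) and~(ii). Throughout, the underlying operator is the one $N=(N_1,N_2)$ built at the beginning of Section~\ref{sect:appl}, so every translation from the PDE hypotheses to the abstract ones will mirror the reasoning already used in the proofs of Theorems~\ref{thm:diffSol} and~\ref{thm:diffSol or}, exploiting only the isotonicity of $S_p,S_q$, their positive homogeneity of degree $1/(p-1)$ and $1/(q-1)$, and the monotonicity of $|\cdot|$ and $\|\cdot\|$.

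First I would check that the radius restriction $c_i\rho_i<r_i$ required by Theorem~\ref{thm:abstr3sols} (and inherited by~\ref{thm:abstr3sols'}) is automatic in this setting. Indeed, every $u\in K_i$ satisfies $u\leq|u|\cdot 1$ a.e., so by the monotonicity of the seminorm we may take $c_i=\|1\|_i$; applying the Harnack-type inequality~\eqref{eq:Harnack-ineq} to the constant function $1$ gives $\|1\|_i\leq 1$, hence $c_i\rho_i\leq\rho_i<r_i$. The sup-bound~\eqref{4tris} then follows from~\eqref{eq:R-ind1} verbatim as in the proof of Theorem~\ref{thm:diffSol}. The lower estimate~\eqref{6''}, in the improved form allowed by Remark~\ref{rem:better-constants-appl} (namely with $r_i$ in place of $r_i/\|\chi_i\|$), is obtained from~\eqref{eq:r-ind0} by the same $\chi_i$-argument used for~\eqref{6'} in Theorem~\ref{thm:diffSol}: if $\|u\|=r_1$, then $u\geq r_1\chi_1$ on $D_1$, whence $f(x,u(x),v(x))\geq\beta_1 r_1^{p-1}$ there, and isotonicity/homogeneity of $S_p$ give $\|N_1(u,v)\|\geq\beta_1^{1/(p-1)}r_1\|S_p(\chi_1)\|>r_1$, with the symmetric bound for $N_2$.

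The step I expect to require the most care is the Leray--Schauder condition~\eqref{eq:deg1-rho}. Suppose, towards a contradiction, that $N(u,v)=\lambda(u,v)$ for some $\lambda\geq 1$ with, say, $|u|=\rho_1$ and $|v|\leq\rho_2$. Since $0\leq u(x)\leq\rho_1$ and $0\leq v(x)\leq\rho_2$, the strict bound in~\eqref{eq:rho-ind1} yields $f(x,u(x),v(x))<A_p\rho_1^{p-1}$ pointwise, and the same isotonicity/homogeneity computation gives
\[
|N_1(u,v)|\leq\bigl(\max f/\rho_1^{p-1}\bigr)^{1/(p-1)}\rho_1|S_p(1)|<A_p^{1/(p-1)}\rho_1|S_p(1)|=\rho_1,
\]
which contradicts $|N_1(u,v)|=\lambda|u|=\lambda\rho_1\geq\rho_1$. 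The symmetric case $|u|\leq\rho_1$, $|v|=\rho_2$ is handled identically through $N_2$. All hypotheses of Theorem~\ref{thm:abstr3sols'} are therefore verified, and the three solutions with the stated localizations follow at once.

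For part~(i), I would apply Theorem~\ref{thm:abstr3sols''}(i) with $\varrho_i,\rho_i$: the bound $|N_i(u,v)|\leq\rho_i$ for $|u|\leq\rho_1$, $|v|\leq\rho_2$ is immediate from~\eqref{eq:rho-ind1} exactly as in the verification of~\eqref{4tris}, while condition~\eqref{6varrho} is derived from~\eqref{eq:ind0-u_1} by the same $\chi_i$-argument used above for~\eqref{6''}. Part~(ii) is likewise an application of Theorem~\ref{thm:abstr3sols''}(ii), with condition~\eqref{6 or varrho} extracted from~\eqref{eq:ind0 or-u_1} in the analogous way. No further ingredients are needed.
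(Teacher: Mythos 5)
Your proposal is correct and follows essentially the same route as the paper: the paper's own proof simply records that \eqref{eq:R-ind1}, \eqref{eq:rho-ind1}, \eqref{eq:r-ind0} imply the abstract hypotheses of Theorem~\ref{thm:abstr3sols'} (as improved by Remark~\ref{rem:better-constants-appl}), with $c_i=1$ coming from \eqref{eq:seminorm-cont}, and then invokes Theorem~\ref{thm:abstr3sols''} for parts (i) and (ii); your sketch supplies exactly these verifications, including the one the paper leaves implicit (that \eqref{eq:rho-ind1} forces $|N_i(u,v)|<\rho_i$ and hence the Leray--Schauder condition \eqref{eq:deg1-rho}). The only cosmetic divergence is that you correctly target condition \eqref{6''} where the paper's text cites \eqref{6'>}, which is what the stated localization of $(u_2,v_2)$ actually requires.
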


\begin{proof}
Observe that \eqref{eq:seminorm-cont} implies that the constants $c_i$ that
occur in the statement of the Theorem~\ref{thm:abstr3sols} are equal to 1.

The assumptions (\ref{eq:R-ind1}), (\ref{eq:rho-ind1}) and (\ref{eq:r-ind0})
imply the conditions (\ref{4tris}), (\ref{eq:deg1-rho}), (\ref{6'>}),
modified according to Remark~\ref{rem:betterconstant}. Hence, the first part
of the assertion follows from Theorem \ref{thm:abstr3sols'}, combined with
Remark~\ref{rem:better-constants-appl}.

The second part follows, in a similar way, from Theorem \ref%
{thm:abstr3sols''}, combined with Remark~\ref{rem:better-constants-appl}.
\end{proof}

\begin{rem}
If the functions $f,g$ do not depend on $x$ and are monotone (increasing or
decreasing) in each of the two variables $u,v,$ then the minima and maxima
in all inequalities from Theorems \ref{thm:diffSol}-\ref{thm:diff3sols} are
reached on some boundary point of the corresponding rectangular domain.
Combining the two kinds of monotonicity on the two variables $u,v,$ for our
functions $f\left( u,v\right) ,g\left( u,v\right) ,$ we see that sixteen
cases are possible. For example, in the case of Theorem \ref{thm:diffSol},
we may have the following situations:

(1) $f,g$ increasing in both variables; then the conditions (\ref{eq:ind1})
and (\ref{eq:ind0}) read as%
\begin{equation*}
\begin{array}{ll}
f\left( R_{1},R_{2}\right) \leq A_{p}R_{1}^{p-1} & \ g\left(
R_{1},R_{2}\right) \leq A_{q}R_{2}^{q-1} \\
f\left( r_{1},0\right) >B_{1,p}r_{1}^{p-1} & \ g\left( 0,r_{2}\right)
>B_{2,q}r_{2}^{q-1};%
\end{array}%
\end{equation*}

(2) $f$ increasing in $u$ and decreasing in $v,$ and $g$ increasing in both $%
u,v;$ then the conditions (\ref{eq:ind1}) and (\ref{eq:ind0}) become%
\begin{equation*}
\begin{array}{ll}
f\left( R_{1},0\right) \leq A_{p}R_{1}^{p-1} & \ g\left( R_{1},R_{2}\right)
\leq A_{q}R_{2}^{q-1} \\
f\left( r_{1},R_{2}\right) >B_{1,p}r_{1}^{p-1} & \ g\left( 0,r_{2}\right)
>B_{2,q}r_{2}^{q-1}.%
\end{array}%
\end{equation*}
\end{rem}

\begin{rem}
\label{solutions-regular} It is worth mentioning that the solutions $(u,v)$
of the system (\ref{eq:diff}) are continuously differentiable. This follows
from the fact that $S_{r}(L^{\infty }\left( \Omega \right) )\subset C^{1}(%
\overline{\Omega }),$ $r>1$ (see \cite{b:AC}). Moreover, as a consequence of
the Harnack inequality (see \cite{b:Trudinger}), if $u\geq 0$ and $u\neq 0,$
then
\begin{equation*}
u(x)>0\ \text{ for }\ x\in \Omega
\end{equation*}%
(the same for $v$). Indeed, the Harnack inequality implies that the set $%
\{x\in \Omega :u\left( x\right) =0\}$ is open. Being also closed in $\Omega
, $ it is equal either to $\emptyset $ or to $\Omega .$ The latter is
excluded by the assumption $u\neq 0.$
\end{rem}

\subsection{Non-existence results}

We now present some sufficient conditions for the non-existence of positive
solutions of the system~\eqref{eq:diff}.

\begin{thm}
Assume that $(u,v)$ is a nonnegative solution of the system~\eqref{eq:diff}.
If one of the following conditions holds:
\begin{align}
f(x,u,v)& <\lambda _{1,p}u^{p-1},\ \text{for every}\ (x,u,v)\in \Omega
\times (0,\infty )\times \lbrack 0,\infty ),  \label{eq:nonexistence1} \\
f(x,u,v)& >\lambda _{1,p}u^{p-1},\ \text{for every}\ (x,u,v)\in \Omega
\times (0,\infty )\times \lbrack 0,\infty ),  \label{eq:nonexistence2} \\
f(x,u,v)& >B_{1,p}u^{p-1},\ \text{for every}\ (x,u,v)\in D_{1}\times
(0,\infty )\times \lbrack 0,\infty ),  \label{eq:nonexistence3}
\end{align}%
then $u=0.$ Similarly, if one of the following conditions holds:
\begin{align}
g(x,u,v)& <\lambda _{1,q}v^{q-1},\ \text{for every}\ (x,u,v)\in \Omega
\times \lbrack 0,\infty )\times (0,\infty ),  \label{eq:nonexistence1'} \\
g(x,u,v)& >\lambda _{1,q}v^{q-1},\ \text{for every}\ (x,u,v)\in \Omega
\times \lbrack 0,\infty )\times (0,\infty ),  \label{eq:nonexistence2'} \\
g(x,u,v)& >B_{2,q}v^{q-1},\ \text{for every}\ (x,u,v)\in D_{2}\times \lbrack
0,\infty )\times (0,\infty ),  \label{eq:nonexistence3'}
\end{align}%
then $v=0.$
\end{thm}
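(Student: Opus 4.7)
The plan is to treat the three conditions on $f$ separately (the ones on $g$ follow by symmetry), and in each case derive a contradiction from the assumption $u\neq 0$. By Remark~\ref{solutions-regular}, $u\neq 0$ upgrades to $u>0$ throughout $\Omega$, so ratios such as $f(x,u,v)/u^{p-1}$ are well defined on $\Omega$ and the pointwise hypotheses can be manipulated without ambiguity.

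Condition \eqref{eq:nonexistence3} I would handle entirely within the cone framework already developed. Since $u=N_{1}(u,v)=S_{p}(f(\cdot,u,v))\in K_{1}$, we have $u\geq \Vert u\Vert \chi_{1}$ and $\Vert u\Vert >0$ (because $u>0$ on the compact set $D_{1}$). The continuous function $f(x,u(x),v(x))-B_{1,p}u(x)^{p-1}$ is strictly positive on $D_{1}$, hence bounded below there by a positive constant, so there exists $\lambda>1$ such that $f\geq \lambda B_{1,p} u^{p-1}$ on $D_{1}$. Combined with $u\geq \Vert u\Vert $ on $D_{1}$ this sharpens to the pointwise inequality $f\geq \lambda B_{1,p}\Vert u\Vert ^{p-1}\chi_{1}$ on $\Omega$. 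Applying $S_{p}$ and using its isotonicity and homogeneity of degree $1/(p-1)$, then the monotonicity of $\Vert \cdot \Vert $, I obtain
$$\Vert u\Vert \ \geq\ (\lambda B_{1,p})^{1/(p-1)}\,\Vert u\Vert \,\Vert S_{p}(\chi_{1})\Vert \ =\ \lambda^{1/(p-1)}\,\Vert u\Vert ,$$
the last equality being the definition $B_{1,p}=\Vert S_{p}(\chi_{1})\Vert ^{-(p-1)}$. Dividing by $\Vert u\Vert >0$ contradicts $\lambda>1$.

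Conditions \eqref{eq:nonexistence1} and \eqref{eq:nonexistence2} instead rest on the variational characterization of $\lambda_{1,p}$, for which I would invoke the Allegretto--Huang Picone-type inequality for the $p$-Laplacian. Let $\phi>0$ denote a first eigenfunction with $-\Delta_{p}\phi=\lambda_{1,p}\phi^{p-1}$. For \eqref{eq:nonexistence2}, Picone applied to the pair $(\phi,u)$ yields
$$\lambda_{1,p}\int_{\Omega}\phi^{p}\ =\ \int_{\Omega}|\nabla\phi|^{p}\ \geq\ \int_{\Omega}\frac{\phi^{p}}{u^{p-1}}(-\Delta_{p}u)\ =\ \int_{\Omega}\frac{\phi^{p}}{u^{p-1}}f(x,u,v),$$
and integrating the strict inequality $f>\lambda_{1,p}u^{p-1}$ against the positive weight $\phi^{p}/u^{p-1}$ forces $\lambda_{1,p}\int\phi^{p}>\lambda_{1,p}\int\phi^{p}$, a contradiction. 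Condition \eqref{eq:nonexistence1} is dispatched symmetrically: Picone applied to $(u,\phi)$ gives $\int|\nabla u|^{p}\geq \lambda_{1,p}\int u^{p}$, whereas testing the equation for $u$ against $u$ itself yields $\int|\nabla u|^{p}=\int fu<\lambda_{1,p}\int u^{p}$, a contradiction.

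The principal technical difficulty lies in the admissibility of the test functions $u^{p}/\phi^{p-1}$ and $\phi^{p}/u^{p-1}$ in $W_{0}^{1,p}(\Omega)$ required by the integral form of Picone's inequality; this rests on the $C^{1}$-regularity of $u$ and $\phi$ (noted for $u$ in Remark~\ref{solutions-regular}) together with Hopf-type non-degeneracy at $\partial\Omega$, both standard in the smooth $C^{1,\gamma}$ setting assumed throughout. In the write-up I would simply cite the appropriate references instead of re-deriving the identity.
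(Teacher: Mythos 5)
Your argument is correct, and for conditions \eqref{eq:nonexistence1} and \eqref{eq:nonexistence3} it coincides in substance with the paper's proof: the first case is the Rayleigh-quotient computation $\lambda_{1,p}\int_\Omega u^p\le\int_\Omega|\nabla u|^p=\int_\Omega f(x,u,v)\,u<\lambda_{1,p}\int_\Omega u^p$ (your invocation of Picone for the first inequality is heavier than needed, since it is just the variational characterization of $\lambda_{1,p}$), and the third case is the same cone estimate, with your multiplicative slack $\lambda>1$ extracted from compactness of $D_1$ playing exactly the role of the paper's radius $r_1>r:=\Vert u\Vert$. The genuine divergence is in case \eqref{eq:nonexistence2}: the paper rewrites the equation as $-\Delta_p u=\lambda_{1,p}u^{p-1}+h$ with $h>0$ a.e.\ and $h\in L^\infty(\Omega)$, and then simply cites the nonexistence theorem of \cite{MR1354715}, whereas you prove the same fact directly via the Allegretto--Huang Picone inequality with test function $\phi^p/u^{p-1}$. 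Your route is more self-contained in spirit, but it shifts the burden onto the admissibility of that test function in $W_0^{1,p}(\Omega)$, which needs the Hopf boundary-point lemma for $u$ (available here since $-\Delta_pu\ge0$, $u\not\equiv0$ and $\Omega$ is of class $C^{1,\gamma}$) together with the $C^1$ regularity of Remark~\ref{solutions-regular}; you correctly flag this and defer it to the literature, which is precisely the technicality the paper's citation is designed to absorb. Either way the conclusion stands.
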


\begin{proof}
Let us observe that $-\Delta _{p}u=f(x,u,v)\geq 0.$ Suppose on the contrary
that $u\neq 0.$ Then $u(x)>0$ for $x\in \Omega $ (see Remark \ref%
{solutions-regular}).

Assume that the inequality (\ref{eq:nonexistence1}) holds. Then $%
f(x,u(x),v(x))<\lambda_{1,p}u^{p-1}(x)$ almost everywhere in $\Omega$ and
consequently
\begin{equation*}
\lambda_{1,p}\int_\Omega u^p\leq \int_\Omega |\nabla u|^p=\int_\Omega
f(x,u,v)\cdot u<\lambda_{1,p}\int_\Omega u^{p-1}u,
\end{equation*}
a contradiction.

Assume now that (\ref{eq:nonexistence2}) holds. Then
\begin{equation}
-\Delta _{p}u=\lambda _{1,p}u^{p-1}+h,  \label{eq:not possible}
\end{equation}%
where the function $h(x):=f(x,u(x),v(x))-\lambda _{1,p}u^{p-1}(x)$ is
positive almost everywhere in $\Omega $ and is obviously in the space $%
L^{\infty }(\Omega ).$ This contradicts the fact from \cite{MR1354715},
which states that (\ref{eq:not possible}) has no solutions.

Assume now that the inequality (\ref{eq:nonexistence3}) holds. Put $r:=\Vert
u\Vert .$ Then, since $u\in K_{1},$ we have that $u(x)\geq r$ for $x\in
D_{1}.$ Therefore
\begin{equation*}
F(u,v)(x)=f(x,u(x),v(x))>B_{1,p}u(x)^{p-1}\geq B_{1,p}r^{p-1}.
\end{equation*}%
By the continuity of $f,$ $u$ and $v$ (see Remark \ref{solutions-regular})
and the compactness of $D_{1}$ we deduce that $F(u,v)(x)\geq
B_{1,p}r_{1}^{p-1}$ for some $r_{1}>r$ and all $x\in D_{1}.$ This implies
that $F(u,v)\geq B_{1,p}r_{1}^{p-1}\chi _{1}.$ Then, by the isotonicity of $%
S_{p},$ the monotonicity of the seminorm and the definition of $B_{1,p},$ we
derive
\begin{equation*}
r=\Vert u\Vert =\Vert N_{1}(u,v)\Vert =\Vert S_{p}F(u,v)\Vert \geq \Vert
S_{p}(B_{1,p}r_{1}^{p-1})\Vert =B_{1,p}^{1/(p-1)}r_{1}\Vert S_{p}(\chi
_{1})\Vert =r_{1},
\end{equation*}%
a contradiction.

The second assertion can be proved analogously.
\end{proof}

\begin{cor}
\emph{(i)} If one of the inequalities \emph{(\ref{eq:nonexistence1})-(\ref%
{eq:nonexistence3'})} holds, then there are no positive solutions of the
system \emph{(\ref{eq:diff})}.

\emph{(ii)} If one of the inequalities \emph{(\ref{eq:nonexistence1})-(\ref%
{eq:nonexistence3})} holds and one of the inequalities \emph{(\ref%
{eq:nonexistence1'})-(\ref{eq:nonexistence3'})} holds, then there are no
nontrivial nonnegative solutions of the system \emph{(\ref{eq:diff})}.
\end{cor}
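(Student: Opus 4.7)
The plan is to deduce both parts of the corollary as essentially immediate consequences of the preceding theorem, which separately forces $u=0$ under any of the hypotheses \eqref{eq:nonexistence1}--\eqref{eq:nonexistence3} and forces $v=0$ under any of the hypotheses \eqref{eq:nonexistence1'}--\eqref{eq:nonexistence3'}. No new analytic machinery is needed; the work is bookkeeping on what ``positive'' versus ``nontrivial nonnegative'' mean in the terminology fixed at the start of Section \ref{sect:appl}.

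For part (i), I would start by recalling that, by definition, a positive solution $(u,v)$ satisfies $u>0$ \emph{and} $v>0$, hence in particular $u\neq 0$ and $v\neq 0$. Now suppose, for contradiction, that $(u,v)$ is such a positive solution and that one of the six inequalities \eqref{eq:nonexistence1}--\eqref{eq:nonexistence3'} holds. If the assumed inequality is among \eqref{eq:nonexistence1}--\eqref{eq:nonexistence3}, the previous theorem forces $u=0$, contradicting $u\neq 0$. If instead the assumed inequality is among \eqref{eq:nonexistence1'}--\eqref{eq:nonexistence3'}, the previous theorem forces $v=0$, contradicting $v\neq 0$. Either way we reach a contradiction, so no positive solution can exist.

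For part (ii), I would likewise let $(u,v)$ be any nontrivial nonnegative solution, i.e.\ $u\geq 0$, $v\geq 0$, and $(u,v)\neq (0,0)$. Under the standing hypothesis that one inequality from \eqref{eq:nonexistence1}--\eqref{eq:nonexistence3} and one from \eqref{eq:nonexistence1'}--\eqref{eq:nonexistence3'} both hold, the preceding theorem applied to each group yields $u=0$ and $v=0$ simultaneously, contradicting nontriviality.

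The only ``obstacle,'' if it can be called that, is simply being precise about the definitions of \emph{positive} versus \emph{nontrivial nonnegative} solution recalled at the beginning of Section \ref{sect:appl}: in (i) we need to exclude only solutions with \emph{both} components nonzero, so one side of the theorem suffices, while in (ii) we must exclude solutions with at least \emph{one} component nonzero, which is why the hypothesis asks for a condition from each of the two groups. Once this distinction is stated, both parts follow from the previous theorem without further calculation.
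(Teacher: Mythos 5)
Your proposal is correct and follows exactly the intended route: the paper states this corollary without proof precisely because it is an immediate consequence of the preceding nonexistence theorem, with the only point of care being the distinction between \emph{positive} (both components nonzero, so one condition from either group suffices) and \emph{nontrivial nonnegative} (at least one component nonzero, so a condition from each group is needed), which you handle correctly.
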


\subsection{Example}

Let%
\begin{equation*}
f(x,u,v)=\varphi (x,u,v)\cdot \frac{u^{2}}{4+u^{3}},\quad g(x,u,v)=\psi
(x,u,v)\cdot \arctan ^{2}v,
\end{equation*}%
where $\varphi $ and $\psi $ are continuous functions such that
\begin{equation*}
a\leq \varphi (x,u,v)\leq b,\ \ \ c\leq \psi (x,u,v)\leq d,\
\end{equation*}%
for every$\ (x,u,v)\in \Omega \times \lbrack 0,+\infty )\times \lbrack
0,+\infty )$ and some fixed constants $a,b,c,d\in (0,+\infty ).$

\begin{prop}
Under the above assumptions, there exists a constant $\lambda _{0}$
(depending on $a,b,c,d$ and $\Omega $) such that for each $\lambda >\lambda
_{0},$ the problem%
\begin{equation}
\left\{
\begin{array}{ll}
-\Delta u=\lambda f(x,u,v) & \text{in }\Omega , \\
-\Delta v=\lambda g(x,u,v) & \text{in }\Omega , \\
u,v=0 & \text{on }\partial \Omega ,%
\end{array}%
\right.  \label{eq:diff exa}
\end{equation}%
has at least two nontrivial nonnegative solutions.
\end{prop}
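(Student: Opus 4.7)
The plan is to apply Theorem \ref{thm:diff3sols} for $\lambda$ large, which produces three nonnegative solutions; the latter two are automatically nontrivial (the second because $|u_{2}|>\rho_{1}$ or $|v_{2}|>\rho_{2}$, the third because $\|u_{3}\|>r_{1}$ and $\|v_{3}\|>r_{2}$). Thus I only need to exhibit, for every $\lambda$ above some threshold, parameters $\rho_{i},r_{i},R_{i}$ satisfying $0<\rho_{i}<r_{i}<\|1\|_{i}R_{i}$ together with the three structural inequalities \eqref{eq:R-ind1}, \eqref{eq:rho-ind1}, \eqref{eq:r-ind0}.

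Fix once and for all compact sets $D_{1},D_{2}\subset\Omega$ with nonempty interior; this freezes the constants $A_{2},B_{1,2},B_{2,2},\|1\|_{1},\|1\|_{2}$. Write $h(u)=u^{2}/(4+u^{3})$ and $k(v)=\arctan^{2}v$, so that $f=\varphi\,h(u)$ and $g=\psi\,k(v)$. The relevant facts are: $h$ is bounded by $1/3$, with $h(u)\sim u^{2}/4$ near zero and $h(u)\sim 1/u$ at infinity (so $h$ has a bump); $k$ is increasing, bounded by $\pi^{2}/4$, with $k(v)\sim v^{2}$ near zero. I propose
\[
R_{1}=\tfrac{\lambda b}{3A_{2}}+1,\quad R_{2}=\tfrac{\lambda d\pi^{2}}{4A_{2}}+1,\quad \rho_{1}=\tfrac{A_{2}}{\lambda b},\quad \rho_{2}=\tfrac{A_{2}}{\lambda d},\quad r_{1}=\tfrac{8B_{1,2}}{\lambda a},\quad r_{2}=1.
\]
The chain $\rho_{i}<r_{i}<\|1\|_{i}R_{i}$ then holds for large $\lambda$ using only $A_{2}\leq B_{1,2}$ and $a\leq b$.

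Conditions \eqref{eq:R-ind1} and \eqref{eq:rho-ind1} are routine: the global bounds $\max f\leq b/3$ and $\max g\leq d\pi^{2}/4$ combined with the choice of $R_{i}$ deliver \eqref{eq:R-ind1}, while the quadratic estimates $h(\rho_{1})\leq\rho_{1}^{2}/4$ and $k(\rho_{2})\leq\rho_{2}^{2}$ combined with $\rho_{i}=O(1/\lambda)$ deliver \eqref{eq:rho-ind1}. The delicate step is \eqref{eq:r-ind0} for $f$: because $h$ is non-monotone, I must locate $\min_{[r_{1},R_{1}]}h$. With $r_{1}=O(1/\lambda)$ and $R_{1}=O(\lambda)$, one checks that $h(r_{1})\approx r_{1}^{2}/4=O(1/\lambda^{2})$ is smaller than $h(R_{1})\approx 1/R_{1}=O(1/\lambda)$ for $\lambda$ large, so $\min_{[r_{1},R_{1}]}h=h(r_{1})$. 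Consequently $\lambda a\,h(r_{1})/r_{1}\approx\lambda a r_{1}/4=2B_{1,2}>B_{1,2}$. For $g$, monotonicity of $k$ yields $\min_{[r_{2},R_{2}]}k=k(1)=\pi^{2}/16$, so $\lambda c\pi^{2}/16>B_{2,2}$ for $\lambda$ large.

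The main obstacle is the calibration of $r_{1}$: the bump shape of $h$ forces $r_{1}$ to be simultaneously (i) small enough that the minimum of $h$ on $[r_{1},R_{1}]$ is attained at $r_{1}$ rather than $R_{1}$, (ii) strictly larger than $\rho_{1}$, and (iii) large enough that $\lambda h(r_{1})/r_{1}>B_{1,2}/a$. The coefficient $8B_{1,2}/a$ is engineered so that the relevant ratio beats $B_{1,2}$ by a fixed factor for every $\lambda$ above some threshold $\lambda_{0}$ depending on $a,b,c,d,\Omega$. Once the three inequalities are verified, Theorem \ref{thm:diff3sols} supplies the three solutions, and its second and third provide the required pair of nontrivial nonnegative solutions.
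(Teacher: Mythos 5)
Your proposal is correct and follows essentially the same route as the paper: both apply Theorem \ref{thm:diff3sols} with $R_i$ of order $\lambda$, exploit the unimodality of $u^2/(4+u^3)$ and the monotonicity of $\arctan^2 v$ to reduce the minima in \eqref{eq:r-ind0} to endpoint evaluations, and discard the possibly trivial first solution; the paper merely uses the slightly different calibration $r_1=1/\sqrt{R_1(\lambda)}$ with $r_2$ fixed arbitrarily, whereas you take $r_1\sim\lambda^{-1}$ and $r_2=1$, and both work. The only microscopic point to polish is that for \eqref{eq:rho-ind1} your bound $k(\rho_2)\le\rho_2^2$ with $\rho_2=A_2/(\lambda d)$ yields only $\le A_2$ rather than the required strict inequality, which is repaired by noting $\arctan^2 v<v^2$ for $v>0$ or by shrinking $\rho_2$ by a constant factor.
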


\begin{proof}
We shall apply Theorem \ref{thm:diff3sols}. To this end, consider any
compact subset $D\subset \Omega $ and put $D_{1}:=D,$ $D_{2}:=D.$ In order
to simplify our notation we denote
\begin{equation*}
A:=A_{2},\ B:=B_{1,2}=B_{2,2},\ \Phi (x)=x^{2}/(4+x^{3})\ \ \text{and\ \ }%
\Psi (x)=\arctan ^{2}x.
\end{equation*}%
Note that $\Phi (x)\leq l_{1}:=1/3,$ $\Psi (x)\leq l_{2}:=\pi ^{2}/4,$ $\Psi
$ is increasing, while $\Phi $ is increasing in $[0,2]$ and decreasing in $%
[2,\infty ).$ Therefore, if $\rho _{1}<2$ then the conditions %
\eqref{eq:R-ind1}, \eqref{eq:rho-ind1} and \eqref{eq:r-ind0} are implied by
the following ones:%
\begin{equation}
\lambda l_{1}b\leq AR_{1},\quad \lambda l_{2}d\leq AR_{2},  \label{eq:exa1}
\end{equation}%
\begin{equation}
\frac{\Phi (\rho _{1})}{\rho _{1}}<\frac{A}{\lambda b},\quad \frac{\Psi
(\rho _{2})}{\rho _{2}}<\frac{A}{\lambda d},  \label{eq:exa2}
\end{equation}%
\begin{equation}
\lambda a\min \{\Phi (r_{1}),\Phi (R_{1})\}>Br_{1},\quad \frac{\Psi (r_{2})}{%
r_{2}}>\frac{B}{\lambda c}.  \label{eq:exa3}
\end{equation}%
If we put
\begin{equation*}
R_{1}(\lambda ):=\lambda l_{1}b/A\ \ \ \text{and\ \ \ }R_{2}(\lambda
):=\lambda l_{2}d/A,
\end{equation*}%
then the condition \eqref{eq:exa1} is satisfied and the condition %
\eqref{eq:exa3} has the form
\begin{equation}
\frac{R_{1}(\lambda )}{r_{1}}\min \{\Phi (r_{1}),\ \ \Phi (R_{1})\}>l_{1}%
\frac{B}{A}\frac{b}{a}=:\gamma ,\ \ \frac{\Psi (r_{2})}{r_{2}}>\frac{B}{%
\lambda c}.  \label{eq:exa3'}
\end{equation}%
Fix $r_{2}>0.$ There exists $\lambda _{1}$ such that the second inequality
from \eqref{eq:exa3'}, as well as $r_{2}<\Vert 1\Vert R_{2}(\lambda ),$ are
satisfied for $\lambda >\lambda _{1}.$ Let us put
\begin{equation*}
r_{1}=r_{1}(\lambda )=1/\sqrt{R_{1}(\lambda )}.
\end{equation*}%
Then there exists $\lambda _{2}\geq \lambda _{1}$ such that $r_{1}(\lambda
)<\Vert 1\Vert R_{1}(\lambda )$ for $\lambda >\lambda _{2}.$ The first
inequality from \eqref{eq:exa3'} becomes
\begin{equation}
\min \left\{ \frac{R_{1}(\lambda )\sqrt{R_{1}(\lambda )}}{1+4R_{1}(\lambda )%
\sqrt{R_{1}(\lambda )}},\ \frac{R_{1}^{3}(\lambda )}{4+R_{1}^{3}(\lambda )}%
\right\} >\frac{\gamma }{\sqrt{R_{1}(\lambda )}}.  \label{eq:exa3''}
\end{equation}%
If $\lambda \rightarrow \infty ,$ then the left-hand side of %
\eqref{eq:exa3''} tends to $1/4$ and the right-hand side of \eqref{eq:exa3''}
tends to $0.$ Therefore, there exists $\lambda _{0}\geq \lambda _{2}$ such
that \eqref{eq:exa3''} is satisfied for $\lambda >\lambda _{0}.$ Having
defined $r_{i}$ and $R_{i}$ for $\lambda >\lambda _{0},$ let us choose $\rho
_{1}<r_{1}$ and $\rho _{2}<r_{2}$ that satisfy \eqref{eq:exa2}. This is
possible because $\Phi (x)=o(x)$ and $\Psi (x)=o(x)$ in $0.$ By Theorem \ref%
{thm:diff3sols} we obtain the existence of at least two nontrivial
nonnegative solutions to \eqref{eq:diff exa} for $\lambda >\lambda _{0}.$

Let us observe that for a fixed $\lambda >\lambda _{0},$ the nontrivial
nonnegative solutions $(u_{1},v_{1})$ and $(u_{2},v_{2})$ given by Theorem %
\ref{thm:diff3sols}, satisfy
\begin{eqnarray*}
|u_{i}| &\leq &R_{1}(\lambda ),\ |v_{i}|\leq R_{2}(\lambda )\ \ \ \left(
i=1,2\right) ,\ \Vert u_{1}\Vert >r_{1}(\lambda ),\Vert v_{1}\Vert >r_{2}, \\
\Vert u_{2}\Vert &<&r_{1}(\lambda ),\ \Vert v_{2}\Vert <r_{2}\ \ \text{and\
\ }|u_{2}|>\rho _{1}(\lambda )\ \ \text{or\ \ }|v_{2}|>\rho _{2}(\lambda ).
\end{eqnarray*}%
Note that both components of the first solution are positive.
\end{proof}

\section*{Acknowledgments}
R. Precup was supported by a grant of the Romanian National
Authority for Scientific Research, CNCS -- UEFISCDI, project number
PN-II-ID-PCE-2011-3-0094. This paper was written during the postdoctoral
stage of M. Maciejewski at the University of Calabria, supported by a
research fellowship within the project ``Enhancing Educational Potential of
Nicolaus Copernicus University in the Disciplines of Mathematical and
Natural Sciences'' (Project no. POKL.04.01.01-00-081/10). A support from the Chair of Nonlinear Mathematical Analysis and Topology is gratefully acknowledged.

\end{document}